\documentclass[preprint,11pt]{imsart}

\linespread{1.1}
\RequirePackage[OT1]{fontenc}
\RequirePackage{amsthm,amsmath}
\RequirePackage[numbers]{natbib}
\RequirePackage[colorlinks,citecolor=blue,urlcolor=blue]{hyperref}

\usepackage{amsmath,amsfonts,amsthm,amssymb,color}
\usepackage{fancyhdr}
\usepackage{subfig,balance,algorithm,algorithmic,lineno,multirow}

\usepackage{epsfig}
\usepackage{graphicx}
\usepackage{xcolor}
\usepackage{float}
\usepackage{verbatim}
\usepackage{bbm}

\newcommand{\bB}{\mathbb{B}}
\newcommand{\bR}{\mathbb{R}}
\newcommand{\bN}{\mathbb{N}}

\newcommand{\cF}{\mathcal{F}}
\newcommand{\cG}{\mathcal{G}}

\newcommand{\cN}{\mathcal{N}}

\newcommand{\cFt}{\mathcal{F}_{t}}

\newcommand{\Exp}{{\sf E}}
\newcommand{\Var}{{\sf Var}}

\newcommand{\Pro}{{\sf P}}

\newcommand{\bv}{\mathsf{b}}

\def\argmax{\mathop{\mbox{argmax}}}
\def\argmin{\mathop{\mbox{argmin}}}

\newtheorem{lemma}{Lemma}

\newtheorem{proposition}{Proposition}

\theoremstyle{remark} \theoremstyle{lemma}
\theoremstyle{definition} \theoremstyle{corol}
\theoremstyle{proposition} \theoremstyle{condition}

\newcommand{\ind}[1]{\mathbbm{1}_{\{#1\}}}



\startlocaldefs
\numberwithin{equation}{section}
\theoremstyle{plain}
\newtheorem{thm}{Theorem}[section]
\endlocaldefs

\begin{document}

\begin{frontmatter}
\title{Sequential Design for Computerized Adaptive Testing that Allows for Response Revision}
\runtitle{Design for CAT that allows for response revision}

\begin{aug}
\author{\fnms{Shiyu} \snm{Wang} \thanks{S.\ Wang is with the  Department of Statistics, University of Illinois, Urbana-Champaign, IL 61820, USA, e-mail: swang86@illinois.edu}} ,
\author{\fnms{Georgios} \snm{Fellouris} \thanks{G.\ Fellouris is with the  Department of Statistics, University of Illinois, Urbana-Champaign, IL, USA, e-mail: fellouri@illinois.edu}}
\and
\author{\fnms{Hua-Hua} \snm{Chang} \thanks{H-H.\ Chang is with the  Department of Psychology, University of Illinois, Urbana-Champaign, IL 61820, USA, e-mail: hhchang@illinois.edu
}}
\runauthor{ Wang, Fellouris and Chang}
\end{aug}

\begin{abstract}
In computerized adaptive testing (CAT), items (questions) are selected in real time based on the already observed responses, so that the ability of the examinee can be estimated as accurately as possible. This is typically formulated as a  non-linear, sequential, experimental design problem with binary observations that correspond to the true or false responses.  However,  most items in practice are multiple-choice and dichotomous models do not make full use of the available data. Moreover, CAT has been heavily criticized for not allowing test-takers to review and revise their answers.  In this work, we propose a novel CAT design that is based on the polytomous nominal response model and  in which test-takers are allowed to revise their responses  at any time during the test. We show that as the number of administered items goes to infinity, the proposed estimator is
  (i) strongly consistent for any item selection and revision strategy and  (ii)  asymptotically normal when the   items are selected to maximize the Fisher information at the  current ability  estimate and the number of revisions is smaller than the  number of items. We also present the findings of a simulation study that supports our asymptotic results.
\end{abstract}

\begin{keyword}[class=MSC]
\kwd[Primary ]{62L05}
\kwd[; secondary ]{62P15}
\end{keyword}

\begin{keyword}
\kwd{Computerized Adaptive Testing, Experimental Design,  Large Sample Theory, Item Response Theory, Martingale Limit Theory,  Nominal Response Model, Response Revision, Sequential Design}
\end{keyword}

\end{frontmatter}

\section{Introduction}
A main goal in educational assessment is the accurate estimation of each test-taker's ability, which is a kind of latent trait. In a conventional paper-pencil test, this estimation is based on the examinee's responses to a preassembled set of items. On the other hand,  in Computerized Adaptive Testing (CAT), items are selected in real time, i.e.,  the next item depends on the already observed responses.  In this way, it is possible to tailor the difficulty of the items to the examinee's ability and estimate the latter more efficiently than that in a paper-pencil test. This is especially true for examinees at the two extreme ends of the ability distribution,  who may  otherwise receive items either too difficult or too easy.   CAT was originally proposed by Lord \cite{r1} and with the  rapid development of modern technology it has become popular for many kinds of measurement tasks, such as educational testing, patient reported outcome, and quality of life measurement.  Examples of large-scale CATs include the Graduate Management Admission Test (GMAT), the National Council Licensure Examination (NCLEX) for nurses, and the Armed Services Vocational Aptitude Battery (ASVAB) \cite{rc}.

The two main tasks in a CAT, i.e., ability estimation and item selection,  depend heavily on Item Response Theory (IRT) for modeling the response of the examinee. This is done by specifying the  probability of a correct answer as a function of  certain item-specific parameters and the ability level, which is represented by a scalar parameter   $\theta$. For example, in the two-parameter logistic (2PL) model,  the probability of a correct answer is equal to $H(a(\theta-b))$, where $H(x)=e^x/ (1+e^x)$. The item parameters for this model are the difficulty parameter $b$ and the discrimination parameter $c$. The 2PL is an extension of the Rasch model \cite{r10}, which corresponds to the special case that $a=1$. On the other hand, the 2PL can be generalized by adding a parameter that captures  the probability of guessing the right answer (3PL model).

Given the IRT model, a standard approach for item selection, proposed by  Lord \cite{r9},  is to select the item that maximizes the Fisher information of the model at each step. For the above logistic models,  this item selection procedure suggests selecting the item with difficulty parameter $b$ equal to $\theta$.
Since $\theta$ is unknown, this implies that the difficulty parameter for  item $i$, $b_i$,  should be equal to $\theta_{i-1}$,  the estimate of $\theta$ based on the first $i-1$ observations. As it was suggested by Wu \cite{wu85,wu86}, the adaptive estimation of $\theta$ can be achieved via
 a likelihood-based approach, instead of  the non-parametric,  Robbins-Monro \cite{robmon} algorithm that had been originally proposed by Lord \cite{r8} and can be very inefficient with binary data \cite{lai}. When $\theta_i$ is selected to be the  Maximum Likelihood Estimator (MLE) of $\theta$ based on the first $i$ observations, the resulting final ability estimator was shown to be strongly consistent and asymptotically normal
 by Ying and Wu \cite{yingwu}  under the Rasch model and   Chang and Ying \cite{r2} under the 2PL and the 3PL models.

 However, while the design and analysis of CAT in the educational and statistical literature typically  assumes  dichotomous IRT models,  most operational CAT programs employ  multiple-choice items, for which  dichotomous models are unable to differentiate among the (more than one)  incorrect answers.
This implies  a   loss of efficiency that could be  avoided if a polytomous IRT model, such as Bock's  \textit{nominal response model} \cite{r1}, was used instead. Indeed, based on a simulation study,  de Ayala \cite{r3} found that a CAT based on the nominal response model leads to a more accurate ability estimator than a CAT that is based on the 3PL model. However, to our knowledge,   there has not been any theoretical support to this claim. In fact,  generalizing the results in \cite{r2} and \cite{yingwu}
in the case of the nominal response model  is a very non-trivial problem, since for items with $m\geq 2$ categories there are
 $2(m-1)$ parameters need to be selected at each step and there is no convenient, explicit form for the item parameters that maximize the Fisher information. 

Our first contribution is that we study theoretically the design of a CAT that is based on the  nominal response model with an arbitrary number of categories. Specifically, assuming that the response are conditionally independent given the selected items and that the item parameters belong to a  bounded set, we prove (Theorem \ref{theo1})  that the MLE of $\theta$ (with any item selection strategy) is strongly consistent as the number of administered items goes to infinity.   If additionally each item is selected  to maximize the Fisher information at the current MLE of the ability level, we show that the  MLE of $\theta$ becomes asymptotically normal and efficient (Theorem \ref{theo3}). The significance of our first work is the design of a CAT that is based on the polytomous nominal response model using the full capacity of multiple-choice items, in comparison to a dichotomous model that wastes information by treating them as binary(true/false).
 
 Our second main contribution in this work  is that we show that a CAT design with the nominal response model can be used to alleviate the major criticism that is addressed to CAT: the fact that test-takers are not allowed to review and revise their answers during the test. Indeed, it is commonly believed that  revision conflicts with the adaptive nature of CAT, and, hence, decreases efficiency and leads to biased ability estimation \cite{r10,r11,r12,r13}. Thus,  none of the currently operational CAT programs allows for response revision, which is allowed by the traditional paper-pencil tests. This has become a main concern for both examinees and testing companies, and for this reason some test programs have decided to switch from CAT to other modes of testing  \cite{r24}.

On the other hand, it is clear that the response revision feature can provide a more user-friendly  environment, by  helping alleviate the test-takers' anxiety. It may  even lead to a more reliable  ability estimation, by reducing the  measurement error that is associated with  careless mistakes (that the examinees may correct). Therefore, it has been a long-standing problem to incorporate the response revision feature  in CAT.  Certain modified  designs have been proposed for this purpose, such as  CAT with restricted review models  \cite{r10}  and multistage adaptive testing \cite{r24}, and it has been argued that if appropriate review and revision rules are set, there will be no  impact on the estimation accuracy and efficiency \cite{r26,r13}.  However, all these studies (that either support or oppose response revision in CAT)  rely on Monte Carlo simulation experiments and lack  a theoretical foundation.

In this work, we propose a CAT design that allows for response revision and we establish its asymptotic properties under a rigorous statistical framework. Specifically,  assuming that we have multiple-choice items with $m\geq 3$ categories, our main idea is to exploit the  flexibility  of the  nominal response model in order to obtain an algorithm that gives partial credit when the examinee corrects a previously wrong answer. Moreover,  our setup for revision is very   flexible: each  examinee is allowed to revise a previous answer at any time during the test as long as each item is revised at most  $m-2$ times. However, this leads to a non-standard  experimental design problem which differs from the traditional CAT setup in two ways. First, items need to be selected at certain random times, which are  determined by the examinee. Second,  information is now accumulated  at two time-scales: that of the observations/ responses and that of the items.

In order to address this problem, we  assume,  as in the context of the standard CAT, that responses  from different items are conditionally independent and that  the nominal response model  governs the first response to each item. However, we now further assume that whenever an item is revised during the test, the new response will follow the conditional pmf  of the nominal response model given that previous answers cannot be repeated. 
Our final ability estimator  is the maximizer of the  conditional likelihood  of all observations (first responses and  revisions) given the selected item parameters and the observed  decisions of the examinee to revise or not at each step.  We show   (Theorem \ref{LLNCAT}) that this estimator
 is strongly consistent for any item selection  and  revision strategy. When in particular the items are selected to maximize the Fisher information of the nominal response model  at the current ability estimate and, additionally, the  number of revisions is "small" relative to the number of items, we show that  the proposed estimator is also  asymptotically normal, with the same asymptotic variance as that in the regular CAT  (Theorem \ref{CLTRCAT}).

From a practical point of view, the most important feature of our approach is that it incorporates revision without the need to calibrate any additional item parameters than the ones used in a regular CAT that is based on the nominal response model.  Indeed, if a  dichotomous IRT model was employed instead,  incorporating revision would require calibrating  the probability of switching from a correct answer to a wrong answer and vice-versa for all items in the pool. This is a very difficult task in practice and  probably infeasible for large-scale implementation.

The rest of the  paper is organized as follows. In Section 2, we introduce the nominal response model and its main properties.  In Section 3, we focus on the  design and asymptotic  analysis  of a regular CAT that is based on the nominal response model. In Section 4, we formulate the problem of CAT  design that allows for response revision, we present the proposed scheme and establish its asymptotic properties.   In Section 5, we present the findings of a simulation study  that  illustrates our theoretical results. We conclude in Section 6.

\section{Nominal Response Model}
In this section, we introduce the nominal response model, which is the IRT model that we will use for the design of CAT in next sections. Throughout the paper, we focus on the case of a single examinee, whose ability is quantified by a scalar parameter  $\theta \in \bR$ that is the quantity of interest. Thus, the underlying probability measure  is denoted by  $\Pro_{\theta}$.

 Let $X$ be the response to a generic  multiple-choice item with  $m\geq 2 $ categories. That is,  $X= k$ when the examinee chooses category $k$,  where   $1 \leq k \leq m$, and the nominal response model assumes that
\begin{align} \label{nominal}
 \Pro_{\theta}(X=k)&= \frac{\exp(a_{k}\theta+c_{k})}{\sum_{h=1}^{m}\exp(a_{h}\theta+c_{h})},  \quad 1 \leq k \leq m,
\end{align}
where  $\{a_{k}, c_{k}\}_{1 \leq k \leq m}$ are real numbers that  satisfy
\begin{align} \label{trivial}
\sum_{k=1}^{m}|a_{k}| \neq 0 \quad  \text{and}  \quad   \sum_{k=1}^{m}|c_{k}| \neq 0
\end{align}
and the following identifiability conditions:
\begin{align} \label{identify}
\sum_{k=1}^{m}a_{k}=\sum_{k=1}^{m}c_{k}=0.
\end{align}
The latter assumption implies that one of the $a_{k}$'s and one of the $c_{k}$'s is completely determined by the others. As a result, without loss of generality we can say that the distribution of $X$ is completely determined by the ability parameter $\theta$ and the vector  $\bv:=(a_{2},\ldots, a_{m}, c_{2},\ldots, c_{m})$. In order to simplify the notation we will write:
\begin{align} \label{nominal2}
 p_{k}(\theta; \bv) &:= \Pro_{\theta}(X=k),  \quad 1 \leq k \leq m.
\end{align}
Note that in the case of binary data ($m=2$), the nominal response model recovers the 2PL model with  discrimination parameter $2|a_1|$ and difficulty parameter $-c_2/ a_2$. In particular,  \eqref{identify} implies $a_1=-a_2$, $c_1=-c_2$ so that
\begin{align*}
p_{2}(\theta;\bv)= 1- p_{1}(\theta;\bv) &= \frac{\exp(2a_{2}\theta+ 2c_{2})}{1+\exp(2a_2\theta+2c_2)}.
\end{align*}
The log-likelihood and the score function of $\theta$   take the form
\begin{align}
\ell(\theta;\mathbf{b}, X) &:= \log \Pro_{\theta}(X) = \sum_{k=1}^{m}  \log \bigl(p_{k}(\theta;\bv)  \bigr)  \,  \ind{X=k}  \label{loglikeli}  \\
s(\theta;\bv, X) &:= \frac{d}{d \theta} \ell(\theta;\bv,X) = \sum_{k=1}^{m}  \bigl[ a_{k}  - \bar{a}(\theta;\bv) \bigr] \, \ind{X=k},  \label{score}
\end{align}
 where $\bar{a}(\theta;\bv)$ is the following weighted average of the $a_k$'s:
\begin{align} \label{alphas}
 \bar{a}(\theta;\bv) &:= \sum_{h=1}^{m} a_{h} \, p_{h}(\theta;\bv).
\end{align}
The Fisher information of $X$ as a function of $\theta$ takes the form:
\begin{align} \label{fisher}
 J(\theta;\bv) &:= \Var_{\theta}[s(\theta;\bv, X)] = \sum_{k=1}^{m} \Bigl( a_{k}-\bar{a}(\theta;\bv) \Bigr)^{2} \,  p_{k}(\theta;\bv),
\end{align}
whereas  the derivative of  $s(\theta;\bv,X)$ with respect to $\theta$
does not depend on $X$ and is equal to  $- J(\theta;\bv)$,  which justifies the following notation:
\begin{equation} \label{derivative}
 s'(\tilde{\theta};\bv) := \frac{d}{d\theta} s(\theta;\bv,X)\Big|_{\theta=\tilde{\theta}} = - J(\tilde{\theta};\bv) .
\end{equation}
Moreover,  $J(\theta;\bv)$ is positive and has an upper bound that is independent of $\theta$, in particular,
\begin{align} \label{fisher2}
0<  J(\theta;\bv) \leq \sum_{k=1}^{m}  a_{k}  ^{2} \,  p_{k}(\theta;\bv) \leq  \sum_{k=1}^{m} a_{k} ^{2} \leq m \, a^{*}(\bv) ,
\end{align}
where we denote $a^{*}(\bv)$ and $a_{*}(\bv)$  as the maximum and minimum of the $a_k$'s respectively, i.e..
$$
a^{*}(\bv)  := \max_{1 \leq k \leq m} a_{k} \quad \text{and} \quad a_{*}(\bv) := \min_{1 \leq k \leq m} a_{k}.
$$
  The first inequality holds in \eqref{fisher2} because the $a_{k}$'s cannot be identical, due to  \eqref{trivial}-\eqref{identify}. However,
 while for   any given $\theta \in \bR$ and $\bv$ we have $a_{*}(\bv) < \bar{a}(\theta;\bv) < a^{*}(\bv)$,  from \eqref{nominal} it follows that $\bar{a}(\theta;\bv) \rightarrow  a_{*}(\bv)$ as $\theta\rightarrow -\infty$ and    $\bar{a}(\theta;\bv) = a^{*}(\bv)$ as $\theta\rightarrow +\infty$ and, consequently,
\begin{align} \label{limiting2}
\lim_{|\theta|\rightarrow\infty}J(\theta;\bv) &=0,
\end{align}
i.e.  the  Fisher information of the model goes to 0 as the ability level goes to $\pm\infty$ for any given item parameter.

Since in practice items are drawn from a given item bank,  we will assume that  $\bv$ takes value in a compact subset  $\bB$ of $\bR^{2m-2}$, a  rather realistic assumption whenever we have a given item bank.
This assumption will be technically useful  through the following result (Maximum Theorem), whose proof can be found, for example, in \cite{econ}, p. 239.

\begin{lemma} \label{lem1}
If  $g: \bR \times \bB \rightarrow \bR$ is a continuous function, then
 $  \sup_{\bv \in \bB} \,  g(\cdot,  \bv)$ and $\inf_{\bv \in \bB} \,  g(\cdot , \bv)$ are also continuous functions. Thus, if  $x_n \rightarrow x_0$, then
  $\sup_{\bv \in \bB} |g(x_n, \bv)-g(x_{0}, \bv)|  \rightarrow 0$.
\end{lemma}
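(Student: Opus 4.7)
The plan is to prove the two claims in sequence: first, that $G(x) := \sup_{\bv \in \bB} g(x, \bv)$ (and similarly the infimum) is continuous in $x$, and second, that the uniform convergence statement follows as a short consequence of the first part.

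For the continuity of $G$, I would fix $x_{0} \in \bR$ and a sequence $x_{n} \to x_{0}$ and argue by contradiction. Suppose $G(x_{n}) \not\to G(x_{0})$; by passing to a subsequence I may assume either $G(x_{n}) \geq G(x_{0}) + \epsilon$ for all $n$, or $G(x_{n}) \leq G(x_{0}) - \epsilon$ for all $n$, for some fixed $\epsilon > 0$. The two key inputs are that for every fixed $x$ the map $\bv \mapsto g(x, \bv)$ attains its supremum on the compact set $\bB$, and that every sequence in $\bB$ admits a further convergent subsequence with limit in $\bB$. In the first case, I would pick maximizers $\bv_{n}$ with $g(x_{n}, \bv_{n}) = G(x_{n})$, extract a subsequence $\bv_{n_{k}} \to \bv^{\star} \in \bB$, and use the joint continuity of $g$ at $(x_{0}, \bv^{\star})$ to conclude $g(x_{0}, \bv^{\star}) \geq G(x_{0}) + \epsilon$, contradicting the definition of $G(x_{0})$. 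In the second case, I would fix a maximizer $\bv^{\star}$ of $g(x_{0}, \cdot)$ on $\bB$ and use $G(x_{n}) \geq g(x_{n}, \bv^{\star}) \to g(x_{0}, \bv^{\star}) = G(x_{0})$ to produce the contradiction. The argument for the infimum is entirely symmetric (or can be obtained by applying the supremum result to $-g$).

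For the uniform convergence claim, I would apply the first part to the auxiliary function $\tilde{g}(x, \bv) := |g(x, \bv) - g(x_{0}, \bv)|$, which is continuous on $\bR \times \bB$ because $g$ is. The function $H(x) := \sup_{\bv \in \bB} \tilde{g}(x, \bv)$ is therefore continuous on $\bR$, and since $H(x_{0}) = 0$ we immediately obtain $H(x_{n}) \to 0$, which is exactly $\sup_{\bv \in \bB} |g(x_{n}, \bv) - g(x_{0}, \bv)| \to 0$.

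The main obstacle I anticipate is in the continuity of $G$: the compactness of $\bB$ must be used in two distinct places (existence of the maximizer and extraction of a convergent subsequence of $\bv_{n}$), and one must be careful that it is the \emph{joint} continuity of $g$ on $\bR \times \bB$, not merely separate continuity in each argument, that allows the simultaneous limit $g(x_{n_{k}}, \bv_{n_{k}}) \to g(x_{0}, \bv^{\star})$. Once this step is executed correctly, the reduction of the uniform convergence statement to an application of the first part is routine.
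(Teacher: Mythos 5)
Your proof is correct. The paper itself does not prove this lemma at all: it simply invokes the Maximum Theorem and points the reader to Sundaram's optimization text (p.~239), so what you have written is a self-contained, elementary proof of the special case actually needed, namely Berge's theorem when the constraint correspondence is the constant compact set $\bB$. Your two-case subsequence argument is exactly the standard proof of upper and lower semicontinuity of the value function: compactness of $\bB$ gives both the existence of maximizers $\bv_{n}$ and a convergent subsequence $\bv_{n_{k}} \to \bv^{\star} \in \bB$, and joint continuity of $g$ (which you correctly flag as essential -- separate continuity would not suffice) gives $g(x_{n_{k}}, \bv_{n_{k}}) \to g(x_{0}, \bv^{\star})$. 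The reduction of the uniform-convergence claim to continuity of $H(x) = \sup_{\bv \in \bB} |g(x,\bv) - g(x_{0},\bv)|$ at $x_{0}$, where $H(x_{0}) = 0$, is clean and is precisely how the lemma is used throughout the paper (e.g.\ in the proofs of Theorems \ref{theo1} and \ref{theo3}). The only trade-off is length: the citation route is shorter, while your route makes the paper self-contained and makes visible exactly where compactness of $\bB$ and joint continuity enter -- both of which are standing assumptions the paper relies on.
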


As a first illustration of this result, note that since  $ J(\theta;\bv)$ is jointly continuous,  then
\begin{equation} \label{bound}
\theta \rightarrow  J_{*}(\theta):=\inf_{\bv \in \bB} J(\theta;\bv)  \quad
 \text{and} \quad \theta \rightarrow  J^{*}(\theta) :=  \sup_{\bv \in \bB} J(\theta;\bv)
\end{equation}
are also continuous functions. Moreover, from  Lemma \ref{lem1} and \eqref{fisher2}  it follows that there is a universal  in $\theta$ upper (but not lower) bound on the Fisher information that corresponds to each ability level, i.e.,
\begin{equation} \label{K}
0 < J_{*}(\theta) \leq  J^{*}(\theta) \leq K:= m \, \sup_{\bv \in \bB} \left( a^{*}(\bv) \right)^{2},  \quad \forall \, \theta \in \bR.
\end{equation}

\section{Design of standard CAT with  Nominal Response Model}

\subsection{Problem formulation}
In this section  we focus on the design of a  CAT with a fixed number of items, $n$,  each of which has $m \geq 2$ categories. Let $X_{i}$ denote the response to item $i$, thus,   $X_{i}= k$ if the examinee chooses category $k$ in item $i$, where  $1 \leq k \leq m$ and $1 \leq i \leq n$.  We assume that the responses are governed by the nominal response model,  defined in \eqref{nominal}, so that
\begin{align} \label{model}
\Pro_{\theta}(X_{i}=k) &:= p_{k}(\theta; \bv_i), \quad 1 \leq k \leq m, \; 1 \leq i \leq n,
\end{align}
where $\theta$ is the scalar parameter of interest that represents the ability of the examinee and  $\bv_i:=(a_{i2},\ldots, a_{im}, c_{i2},\ldots, c_{im})$ is a $\bB$-valued  vector  that characterizes item $i$ and satisfies \eqref{trivial}-\eqref{identify}. Moreover, we assume that  the responses  are conditionally independent given the selected items, in the sense that
\begin{align} \label{independence}
\Pro_{\theta}  (X_{1}, \ldots, X_{i} \, | \,  \bv_1,\ldots,\bv_i )
&= \prod_{j=1}^{i} \Pro_{\theta}  (X_{j} |  \bv_{j} ), \quad 1 \leq i \leq n.
\end{align}
However, while in a conventional paper-pencil test  the parameters $\bv_1, \ldots, \bv_n$ are deterministic,
in a CAT they are random , determined in real time based on the already observed responses. Specifically,  let $\cF^{X}_{i}$ be the information contained in the first $i$ responses, i.e., $\cF_{i}^{X}:=\sigma(X_1,....,X_i)$. Then, each  $\bv_{i+1}$ is an $\cF^{X}_{i}$-measurable, $\bB$-valued  random vector and, as a result, despite assumption \eqref{independence},  the responses are far from  independent and, in fact, they may have a complex dependence structure.

The problem in CAT is to find an \textit{ability estimator}, $\hat{\theta}_{n}$, at the end of the test, i.e.,  an $\cF^{X}_{n}$-measurable estimator of $\theta$,   and  \textit{an item selection strategy},  $(\bv_{i+1})_{1 \leq i \leq n-1}$, so that the accuracy of $\hat{\theta}_{n}$ be optimized.  If we  were able to select each item $i$ so that   $J(\theta;\bv_i)= J^{*}(\theta)$,  where $J^{*}(\theta)$ is the maximum  Fisher information an item can achieve (recall \eqref{bound}) at the true ability level $\theta$,  then we could use  standard asymptotic theory in order to obtain  an  estimator, $\hat{\theta}_{n}$,  such as the MLE, that is asymptotically efficient,  in the sense that
$
\sqrt{n}( \hat{\theta}_n -\theta) \rightarrow \cN\left (0, [J^{*}(\theta)]^{-1} \right)
$ as $n \rightarrow \infty$.  Of course, this is not a feasible item selection strategy, as it requires knowledge of $\theta$, the parameter we are trying to estimate! Nevertheless, we can make use of the adaptive nature of CAT and select items that  maximize the Fisher information at the current estimate of the ability level.  That is,  $\bv_{i+1}$ can be chosen to belong to
\begin{equation} \label{item_select}
\argmax_{\bv\in \bB}J( \hat{\theta}_{i};\bv),
\end{equation}
where  $\hat{\theta}_{i}$ is an estimate of the ability level that is based on the first $i$ responses,  $1 \leq i \leq n$.

We should note that  this item selection method assumes that  each $\bv_i$  can take any value in $\bB$. Of course, this is not the case in practice, where a given item bank has a finite number of items and there are restrictions on  the exposure rate of the items \cite{rr}.
 Nevertheless, this  item selection strategy will provide a benchmark for the best possible performance  that can be expected, at least in an asymptotic sense.

\subsection{Adaptive Maximum Likelihood Estimation of $\theta$}

The item selection strategy \eqref{item_select} calls  for an adaptive estimation of the examinee's ability during the test process. From the
 conditional independence assumption \eqref{independence}  it follows that the conditional log-likelihood function of the first $i$ responses given the selected items takes the form
\begin{align*}
L_{i}(\theta) &:= \log \Pro_{\theta}  (X_{1}, \ldots, X_{i} |  \bv_1,\ldots,\bv_i ) = \sum_{j=1}^{i} \ell(\theta;\bv_j, X_j),
\end{align*}
where $\ell(\theta;\bv_j, X_j)$ is the log-likelihood that corresponds to the $j^{th}$ response and is determined by the nominal response model, according to \eqref{loglikeli}.  Then, the  corresponding   score function takes the form
\begin{align}\label{scoren}
  S_i(\theta)& :=\frac{d}{d\theta} L_{i}(\theta) =\sum_{j=1}^{i} s(\theta;\bv_j, X_j),
 \end{align}
 where  $s(\theta;\bv_j, X_j)$ is  the score function that corresponds to the $j^{th}$ item and is  defined according to \eqref{score}.  We would like our estimate for $\theta$ after the first $i$ observations to be  the root of $S_{i}(\theta)$. Unfortunately, this root does not  exist  for every $1 \leq i \leq n$. Indeed,
$S_{i}(\theta)$ does not have a root   when all acquired responses either correspond to the category with  the largest   $a$-value,  or to the category with smallest  $a$-value. In other words, the root of $S_{i}(\theta)$ exists and is unique for every $i>n_{0}$, where
 \begin{align*}
n_{0}:=\max \Big\{i \in \{1, \ldots, n\}:  & X_{j} =  \argmax\{a_{jk}\}_{k=1}^{m} \; \forall j \leq i \\
 \text{or} \;  &  X_{j} \in \argmin\{a_{jk}\}_{k=1}^{m}  \; \forall j \leq i
\Big\},
\end{align*}
For example,  in a CAT with $n=7$ items of $m=4$ categories where for each item the largest (resp. smallest) $a$-value is associated with category $4$(resp. $1$), for the sequence of responses $1,1,1,3,4,1,3$  we have $n_{0}=3$.

For $i \leq n_{0}$, an initial estimation procedure is needed to estimate the ability parameter. A possible initialization strategy is to set  $\hat{\theta}_0=0$ and, for every $i \leq n_{0}$,
$\hat{\theta}_i=\hat{\theta}_{i-1}+ d$ (resp. $\hat{\theta}_i=\hat{\theta}_{i-1}-  d$) if the acquired responses have  the largest (resp. smallest) $a$-value, whereas $d$ is a predetermined constant.  


\subsection{Asymptotic Analysis}

 We now focus on the asymptotic properties of $\hat{\theta}_n$ as $n \rightarrow \infty$, thus we will assume  without loss of generality that
 $\hat{\theta}_{n}$ is the root of $S_n(\theta)$ for sufficient large values of $n$. Specifically, we will establish the strong consistency of  $\hat{\theta}_{n}$ for any item selection strategy and its asymptotic normality and efficiency when the information maximizing item selection \eqref{item_select} is adopted. Both properties rely heavily on the martingale property of the score function, $S_n(\theta)$, which is established in the following proposition.

\begin{proposition} \label{prop1}
For any item selection strategy, $(\bv_n)_{n \in \bN}$,  the score process $\{S_n(\theta)\}_{n \in \bN}$   is a $(\Pro_\theta, \{\cF_{n}\}_{n \in \bN})$-martingale with bounded increments, mean $0$ and predictable variation  $\langle S(\theta) \rangle_{n} =I_n(\theta)$, where
\begin{align}\label{fishern}
I_n(\theta) &:= \sum_{i=1}^{n} J(\theta;\bv_i), \quad n \in \bN,
\end{align}
and $J(\theta;\bv_i)$ is the Fisher information of the $i^{th}$ item, defined in \eqref{fisher}. Moreover,  for any $\tilde{\theta}$ we have
\begin{align} \label{deriv2}
S^{'}_n(\tilde{\theta}) &:= \frac{d}{d\theta} S(\theta) \Big|_{\theta=\tilde{\theta}}=-I_n(\tilde{\theta}).
\end{align}
\end{proposition}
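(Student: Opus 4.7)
The plan is to verify each clause of the proposition by reducing everything to the single-item identities already established in Section~2, using the conditional independence assumption \eqref{independence} together with the fact that each item parameter $\bv_j$ is $\cF^{X}_{j-1}$-measurable. Throughout I take $\cF_n=\cF^{X}_n$ so that $S_n(\theta)$ and $I_n(\theta)$ are $\cF_n$-measurable, and each $\bv_{j+1}$ is $\cF_j$-measurable by construction.

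First, for the martingale and mean-zero properties, I would write the increment as $\Delta_j(\theta):=s(\theta;\bv_j,X_j)$, note that $\bv_j$ is $\cF_{j-1}$-measurable, and combine this with \eqref{independence} to get that the conditional law of $X_j$ given $\cF_{j-1}$ is $p_k(\theta;\bv_j)$ for $k=1,\dots,m$. Then, using the explicit form \eqref{score}
\[
\Exp_{\theta}[\Delta_j(\theta)\mid\cF_{j-1}]=\sum_{k=1}^{m}\bigl(a_{jk}-\bar a(\theta;\bv_j)\bigr)p_k(\theta;\bv_j)=\bar a(\theta;\bv_j)-\bar a(\theta;\bv_j)=0,
\]
by the definition \eqref{alphas} of $\bar a(\theta;\bv_j)$. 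Summing yields $\Exp_{\theta}[S_n(\theta)]=0$ and the martingale property $\Exp_{\theta}[S_n(\theta)\mid\cF_{n-1}]=S_{n-1}(\theta)$. For bounded increments I would use \eqref{score} again to bound $|\Delta_j(\theta)|\le a^{*}(\bv_j)-a_{*}(\bv_j)\le 2\sup_{\bv\in\bB}|a^{*}(\bv)|$, which is finite by the compactness of $\bB$ and Lemma~\ref{lem1}.

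For the predictable variation, the general identity gives $\langle S(\theta)\rangle_n=\sum_{j=1}^{n}\Exp_{\theta}[\Delta_j(\theta)^{2}\mid\cF_{j-1}]$ since the increments are conditionally centered. Conditioning as above, this conditional second moment equals $\Var_{\theta}[s(\theta;\bv_j,X_j)\mid\bv_j]=J(\theta;\bv_j)$ by the Fisher information formula \eqref{fisher}. Summing over $j$ gives exactly $I_n(\theta)$ as defined in \eqref{fishern}.

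Finally, the derivative formula \eqref{deriv2} is immediate: differentiating \eqref{scoren} term by term and invoking \eqref{derivative} for each summand yields $S'_n(\tilde\theta)=\sum_{j=1}^{n}s'(\tilde\theta;\bv_j)=-\sum_{j=1}^{n}J(\tilde\theta;\bv_j)=-I_n(\tilde\theta)$; the interchange of $d/d\theta$ and the finite sum requires no justification. The only nontrivial step here is the conditioning argument in the first paragraph, i.e., verifying that, under \eqref{independence} together with the $\cF_{j-1}$-measurability of $\bv_j$, the conditional law of $X_j$ given $\cF_{j-1}$ reduces to the nominal response model with parameter $\bv_j$; this is where the adaptive selection does not destroy the clean single-item structure, and everything else is bookkeeping.
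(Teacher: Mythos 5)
Your proposal is correct and follows essentially the same route as the paper: decompose $S_n(\theta)$ into the single-item increments $s(\theta;\bv_j,X_j)$, use the $\cF_{j-1}$-measurability of $\bv_j$ together with \eqref{independence} to compute the conditional mean and conditional second moment via \eqref{score}, \eqref{alphas} and \eqref{fisher}, and differentiate term by term via \eqref{derivative}. Your closing remark spelling out why the conditional law of $X_j$ given $\cF_{j-1}$ reduces to the nominal response model with parameter $\bv_j$ is the one step the paper leaves implicit, and making it explicit is a small improvement rather than a deviation.
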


\begin{proof}
For any $n \in \bN$,
\begin{equation}
 S_{n}(\theta)-S_{n-1}(\theta)= s(\theta;\bv_{n}, X_{n})= \sum_{k=1}^{m} \bigl(a_{nk}- \bar{a}_{n}(\theta; \bv_{n}) \bigr)  \ind{X_{n}=k}.
 \end{equation}
 Therefore,  $| S_{n}(\theta)-S_{n-1}(\theta)| \leq 2K$ for every $n \in \bN$.
Moreover, since  $\bv_{n}$ is an $\cF_{n-1}$-measurable random vector,  it follows directly from  \eqref{score}
\begin{align*}
\Exp_{\theta} [S_{n}(\theta)-S_{n-1}(\theta)|\cF_{n-1}] &= \Exp_{\theta}[ s(\theta;\bv_{n}, X_{n}) | \cF_{n-1}]=0,
\end{align*}
which proves the martingale property of $S_{n}(\theta)$. Next, from   \eqref{fisher} it follows that
\begin{align*}
\Exp_{\theta} [ (S_{n}(\theta)-S_{n-1}(\theta) )^{2}|\cF_{n-1}] &= \Exp_{\theta}[ s^{2}(\theta;\bv_{n}, X_{n}) | \cF_{n-1}]= J(\theta;\bv_{n}),
\end{align*}
which proves that $\langle S(\theta) \rangle_{n}= \sum_{i=1}^{n}  J(\theta;\bv_i).$
Finally, from \eqref{derivative} it follows that for any $\tilde{\theta}$ we have
\begin{align*}
S^{'}_n(\tilde{\theta}) &:= \frac{d}{d\theta} S_n(\theta) \Big|_{\theta=\tilde{\theta}}= \sum_{i=1}^{n} -J(\tilde{\theta};\bv_{i})=  -I_n(\tilde{\theta}),
\end{align*}
which completes the proof.
\end{proof}

With the  next theorem we establish  the strong consistency of $\hat{\theta}_n$ for any item selection strategy.

 \begin{thm} \label{theo1}
 For any  item selection strategy,  as $n \rightarrow \infty$ we have
 \begin{equation} \label{consistent}
\hat{\theta}_n\rightarrow \theta \quad \text{and} \quad
\frac{I_n(\hat{\theta}_{n})}{I_{n}(\theta)}   \rightarrow1  \quad \Pro_{\theta}-\text{a.s.}
\end{equation}
\end{thm}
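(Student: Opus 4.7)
The plan is to combine the martingale structure of $S_n(\theta)$ established in Proposition~\ref{prop1} with the strict monotonicity $S_n'(\theta)=-I_n(\theta)<0$ and the uniform bounds on the Fisher information that follow from the compactness of $\bB$. The first observation is that $J_*(\theta)>0$ by \eqref{K}, so every item contributes at least $J_*(\theta)$ to the aggregate information and
\[
I_n(\theta)\ \geq\ n\,J_*(\theta)\ \longrightarrow\ \infty
\]
regardless of how the items are adaptively chosen. This deterministic linear lower bound is what makes the rest of the argument go through.

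Next I would apply a martingale SLLN. Since $\{S_n(\theta)\}$ has increments bounded in absolute value by $2K$, Azuma--Hoeffding gives
\[
\Pro_\theta\bigl(|S_n(\theta)|\geq n\varepsilon\bigr)\ \leq\ 2\exp\!\bigl(-n\varepsilon^2/(8K^2)\bigr),
\]
which is summable in $n$, so $S_n(\theta)/n\to 0$ a.s.\ by Borel--Cantelli. To pass from this to consistency of the root $\hat\theta_n$, I fix $\varepsilon>0$ and integrate $S_n'(u)=-I_n(u)\leq -n\,J_*(u)$ to obtain
\[
S_n(\theta+\varepsilon)\ =\ S_n(\theta)-\int_\theta^{\theta+\varepsilon}I_n(u)\,du\ \leq\ S_n(\theta)-n\,c_\varepsilon,
\]
where $c_\varepsilon:=\int_\theta^{\theta+\varepsilon}J_*(u)\,du>0$ by continuity and positivity of $J_*$ (Lemma~\ref{lem1} applied to $J$). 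Combined with $S_n(\theta)/n\to 0$, this forces $S_n(\theta+\varepsilon)\to-\infty$ a.s., and symmetrically $S_n(\theta-\varepsilon)\to+\infty$ a.s. Strict monotonicity of $S_n$ in $\theta$ then confines its unique root $\hat\theta_n$ to $(\theta-\varepsilon,\theta+\varepsilon)$ for all large $n$, and since $\varepsilon>0$ was arbitrary, $\hat\theta_n\to\theta$ a.s. For the second claim, Lemma~\ref{lem1} also yields $\eta_n:=\sup_{\bv\in\bB}|J(\hat\theta_n;\bv)-J(\theta;\bv)|\to 0$ a.s.\ on $\{\hat\theta_n\to\theta\}$, whence $|I_n(\hat\theta_n)-I_n(\theta)|\leq n\eta_n$ and
\[
\Bigl|\,\frac{I_n(\hat\theta_n)}{I_n(\theta)}-1\Bigr|\ \leq\ \frac{n\,\eta_n}{I_n(\theta)}\ \leq\ \frac{\eta_n}{J_*(\theta)}\ \longrightarrow\ 0.
\]

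The main obstacle is the passage from the martingale SLLN to consistency of the root: the adaptive item selection makes $I_n(\theta)$ a random, data-dependent process that could a priori grow sublinearly, leaving open the possibility that the ``information'' localising $\hat\theta_n$ is too weak. Compactness of $\bB$ and strict positivity of $J_*$ dispose of this by supplying the deterministic lower bound $I_n(u)\geq n\,J_*(u)$, which is exactly what drives the integration step. A subsidiary issue I would dispatch briefly is that $\hat\theta_n$ is only defined for $n>n_0$: one verifies $n_0<\infty$ a.s.\ by a Borel--Cantelli argument based on a uniform-in-$\bv$ positive lower bound on the probability of a non-extreme response, again supplied by compactness of $\bB$.
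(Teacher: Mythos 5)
Your proof is correct, but it reaches consistency by a genuinely different route than the paper. For the law of large numbers the paper invokes the martingale SLLN normalized by the predictable variation ($S_n(\theta)/I_n(\theta)\to 0$), whereas you use Azuma--Hoeffding plus Borel--Cantelli to get $S_n(\theta)/n\to 0$; these are interchangeable here because $nJ_*(\theta)\le I_n(\theta)\le nK$, though your version comes with an exponential rate for free. The more substantive difference is in localizing the root. The paper Taylor-expands $0=S_n(\hat\theta_n)$ about $\theta$, reduces consistency to showing $I_n(\tilde\theta_n)/I_n(\theta)$ stays bounded away from zero, and — because $J_*$ vanishes at $\pm\infty$ by \eqref{limiting2} — must first rule out $\limsup_n|\hat\theta_n|=\infty$ via a subsequence contradiction built on $\bar a(\hat\theta_{n_j};\bv)-\bar a(\theta;\bv)\to a^*(\bv)-\bar a(\theta;\bv)>0$ and Lemma~\ref{lem1}. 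You instead evaluate $S_n$ only at the deterministic points $\theta\pm\varepsilon$, integrate $S_n'(u)=-I_n(u)\le -nJ_*(u)$ over $[\theta,\theta+\varepsilon]$, and let strict monotonicity trap the root; since the information lower bound is only needed on a fixed compact interval, the degeneracy of $J_*$ at infinity never enters and no separate boundedness argument is required. This is the classical sandwiching argument for roots of monotone estimating equations and is, if anything, cleaner; the paper's Taylor-expansion machinery is not wasted, however, since it is reused verbatim for the asymptotic normality in Theorem~\ref{theo3}, into which your consistency proof would feed equally well. Your treatment of the second claim coincides with the paper's, and your closing remark on $n_0<\infty$ a.s.\ (via a uniform-in-$\bv$ lower bound on the probability of a non-extreme response) actually patches a point the paper glosses over with ``without loss of generality.''
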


\begin{proof}
Let    $(\bv_n)_{n \in \bN}$ be an arbitrary   item selection strategy. From Proposition \ref{prop1} it follows that  $S_n(\theta)$ is a $\Pro_\theta$-martingale with mean $0$ and predictable variation  $I_n(\theta)\geq nJ_{*}(\theta)\rightarrow\infty$, since  $J_{*}(\theta)>0$.  Then, from the  Martingale Strong Law of Large Numbers (see, e.g., \cite{willi}, p. 124),  it follows that as $n \rightarrow \infty$
\begin{align}\label{score_CAT}
\frac{S_n(\theta)}{I_n(\theta)}\rightarrow 0 \quad \Pro_\theta-\mbox{a.s.} .
\end{align}
From a Taylor expansion of  $S_{n}(\theta)$ around  $\hat{\theta}_{n}$  it follows that there  exists some $\tilde{\theta}_n$ that  lies between $\hat{\theta}_n$ and $\theta$ so that
\begin{align}\label{Taylor}
\begin{split}
0=S_n(\hat{\theta}_n) &=S_n(\theta)+S^{'}_n(\tilde{\theta}_n)(\hat{\theta}_n-\theta) = S_n(\theta)- I_n(\tilde{\theta}_n)(\hat{\theta}_n-\theta),
\end{split}
\end{align}
where  the second equality follows from \eqref{deriv2}.  From \eqref{score_CAT} and \eqref{Taylor} we then obtain
\begin{align*}
\frac{I_n(\tilde{\theta}_n)}{I_n(\theta)} \, (\hat{\theta}_n-\theta)\rightarrow 0 \quad \Pro_\theta-\mbox{a.s.}
\end{align*}
The strong consistency of $\hat{\theta}_n$ will then follow as long as we can guarantee that the fraction in the last relationship remains bounded away from $0$ as $n \rightarrow \infty$. However, for every $n$ we have
\begin{align*}
\frac{I_n(\tilde{\theta}_n)}{I_n(\theta)}=\frac{\sum_{i=1}^{n}J(\tilde{\theta}_n;\bv_{i})}{\sum_{i=1}^{n}J(\theta;\bv_i)}\geq \frac{n J_{*}(\tilde{\theta}_n)}{n J^{*}(\theta)} = \frac{J_{*}(\tilde{\theta}_n)}{J^{*}(\theta)}.
\end{align*}
Since $J^{*}(\theta)>0$, it suffices to show   that
$\Pro_\theta(\liminf_n J_{*}(\tilde{\theta}_n)>0)=1$. Since $J_{*}(\theta)$ is continuous,  positive and bounded away from 0 when $|\theta|$ is bounded away from infinity (recall \eqref{limiting2}) and $\tilde{\theta}_n$ lies between $\hat{\theta}_n$ and $\theta$, it suffices to show that

\begin{align} \label{prove}
\Pro_\theta(\limsup_n  |\hat{\theta}_n|>0)=1.
\end{align}
In order to prove \eqref{prove}, we observe first of all that since
 $S_n(\hat{\theta}_n)=0$ for large $n$,  \eqref{score_CAT} can be rewritten as follows:
 \begin{align}\label{proof_tool1}
\frac{S_n(\theta)-S_n(\hat{\theta}_n)}{I_n(\theta)}  \rightarrow 0\quad \Pro_\theta-\text{a.s.}
\end{align}
But for every $n$  we have $I_n(\theta) \leq  n J^{*}(\theta)$ and
\begin{align*}
\begin{split}
S_n(\theta)-S_n(\hat{\theta}_n)
&=\sum_{i=1}^{n} \left[s(\theta;\bv_i, X_i)-s(\hat{\theta}_n
;\bv_i, X_i) \right]   \\
&= \sum_{i=1}^{n}  \left[   \bar{a}(\hat{\theta}_n; \bv_{i})- \bar{a}(\theta;\bv_{i}) \right]   \geq n \inf_{\bv \in \bB} \left[  \bar{a}(\hat{\theta}_n; \bv)- \bar{a}(\theta;\bv) \right],
\end{split},
\end{align*}
therefore we obtain
\begin{align}\label{proof_tool3}
\frac{S_n(\theta)-S_n(\hat{\theta}_n)}{I_n(\theta)}
&\geq \frac{\inf_{\bv \in \bB}  \left[  \bar{a}(\hat{\theta}_n; \bv)- \bar{a}(\theta;\bv) \right]}{ J^{*}(\theta)}.
\end{align}


On the event $\{\limsup_n \hat{\theta}_n =\infty\}$ there exists a subsequence $(\hat{\theta}_{n_{j}})$ of $(\hat{\theta}_{n})$ such that $\hat{\theta}_{n_{j}} \rightarrow \infty$. Consequently, for any $\bv \in \bB$  we have
\begin{align}\label{proof_tool4}
\lim_{n_{j} \rightarrow \infty}  \left[  \bar{a}(\hat{\theta}_{n_{j}};\bv)- \bar{a}(\theta;\bv) \right] =   a^{*}(\bv)  - \bar{a}(\theta;\bv) >0.
\end{align}
Since $a^{*}(\bv) - \bar{a}(\theta;\bv)$ is jointly continuous in $\theta$ and $\bv$, from  Lemma \ref{lem1}  we obtain
\begin{align} \label{proof_tool5}
\liminf_{n_{j} \rightarrow \infty}  \; \inf_{\bv \in \bB}  \left[  \bar{a}(\hat{\theta}_{n_{j}};\bv)- \bar{a}(\theta;\bv) \right] &\geq \inf_{\bv \in \bB}  \left[ \bar{a}(\theta;\bv) - \bar{a}(\theta;\bv) \right] >0.
\end{align}
From  \eqref{proof_tool3} and  \eqref{proof_tool5} it follows that
\begin{align*}
\liminf_{n_{j} \rightarrow \infty}   \frac{S_{n_j}(\theta)-S_{n_j}(\hat{\theta}_{n_{j}})}{I_{n_j}(\theta)}  >0
\end{align*}
and  comparing with   \eqref{proof_tool1} we conclude that
$\Pro_{\theta} ( \limsup_n \hat{\theta}_n =  \infty )=0$. In an identical way we can show that $\Pro_{\theta} ( \liminf_{n} \hat{\theta}_n =  -\infty )=0$, which establishes  \eqref{prove} and completes the proof of the strong consistency of $\hat{\theta}_n$. In order to prove the second part of \eqref{consistent}, we observe that
\begin{align*}
\frac{ | I_n(\hat{\theta}_n)-I_n(\theta)|}{I_{n}(\theta)} &\leq  \frac{1}{n J_{*}(\theta)} \sum_{i=1}^{n} |J(\hat{\theta}_n; \bv_i)- J(\theta; \bv_i)| \\
&\leq \frac{1}{J_{*}(\theta)}   \sup_{\bv\in \bB}|J(\hat{\theta}_n;\bv)-J(\theta;\bv)| .
\end{align*}
But since  $J(\theta;\bv)$ is jointly continuous and $\hat{\theta}_{n}$ strongly consistent, from Lemma \ref{lem1} it follows that
the upper bound goes to 0 almost surely, which completes the proof.
\end{proof}

While the strong consistency of  $\hat{\theta}_n$ could be established  for any item selection strategy, its asymptotic normality and efficiency requires the  information-maximizing item selection strategy \eqref{item_select}.


\begin{thm} \label{theo3}
If the information-maximizing item selection strategy \eqref{item_select} is used,  then $\hat{\theta}_n$ is asymptotically normal   as $n \rightarrow \infty$, since
 \begin{align} \label{clt1}
& \sqrt{I_n( \hat{\theta}_{n}) } \, (\hat{\theta}_n-\theta )  \rightarrow \cN(0,1)
\end{align}
and  asymptotically  efficient, in the sense that
 \begin{align} \label{clt2}
 & \sqrt{n}(\hat{\theta}_n-\theta)  \rightarrow
\cN\left( 0, [ J^{*}(\theta)]^{-1} \right).
\end{align}

\end{thm}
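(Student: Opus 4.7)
My plan is to combine the Taylor expansion \eqref{Taylor} that was already exploited in the consistency proof with a martingale central limit theorem applied to the score process $\{S_n(\theta)\}$. By Proposition \ref{prop1}, $\{S_n(\theta)\}$ is a zero-mean $\Pro_\theta$-martingale whose predictable variation is exactly $I_n(\theta)$, with increments uniformly bounded by $2K$ via \eqref{K}, and $I_n(\theta)\geq n J_*(\theta)\to \infty$ since $J_*(\theta)>0$. This will make the conditional Lindeberg condition trivial, so a standard martingale CLT (e.g. Hall and Heyde, Corollary 3.1) yields
\[
\frac{S_n(\theta)}{\sqrt{I_n(\theta)}} \;\Rightarrow\; \cN(0,1).
\]

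Next I would use the identity $\hat\theta_n-\theta = S_n(\theta)/I_n(\tilde\theta_n)$ coming from \eqref{Taylor}, together with the almost-sure ratio convergences $I_n(\tilde\theta_n)/I_n(\theta)\to 1$ and $I_n(\hat\theta_n)/I_n(\theta)\to 1$. The second is part of \eqref{consistent}, and the first follows from the same Lemma \ref{lem1} uniform-continuity argument used in the proof of Theorem \ref{theo1}, since $\tilde\theta_n$ lies between $\hat\theta_n$ and $\theta$ and is therefore strongly consistent as well. Rewriting
\[
\sqrt{I_n(\hat\theta_n)}\,(\hat\theta_n-\theta)=\sqrt{\frac{I_n(\hat\theta_n)}{I_n(\theta)}}\cdot \frac{I_n(\theta)}{I_n(\tilde\theta_n)}\cdot \frac{S_n(\theta)}{\sqrt{I_n(\theta)}},
\]
Slutsky then delivers \eqref{clt1}. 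Note that the information-maximizing hypothesis is not yet required at this stage.

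The hypothesis \eqref{item_select} enters only to identify the asymptotic rate. Under \eqref{item_select} one has $J(\hat\theta_{i-1};\bv_i)=J^*(\hat\theta_{i-1})$, so I would decompose
\[
J(\theta;\bv_i) = J^*(\hat\theta_{i-1}) + \bigl[J(\theta;\bv_i)-J(\hat\theta_{i-1};\bv_i)\bigr].
\]
Lemma \ref{lem1} together with the strong consistency $\hat\theta_{i-1}\to\theta$ forces the bracketed term to $0$ a.s.\ uniformly over $\bv\in\bB$, while continuity of $J^*$ (which itself follows from Lemma \ref{lem1}) gives $J^*(\hat\theta_{i-1})\to J^*(\theta)$ a.s. A Ces\`aro averaging step then yields $I_n(\theta)/n\to J^*(\theta)$ a.s., hence $I_n(\hat\theta_n)/n\to J^*(\theta)$ a.s., and \eqref{clt2} follows from \eqref{clt1} by a final application of Slutsky.

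The main obstacle I anticipate is justifying the Ces\`aro step cleanly: the summands $J^*(\hat\theta_{i-1})$ and the error terms in the decomposition depend on adaptively generated inputs rather than i.i.d.\ data, so one cannot invoke an ordinary law of large numbers. The key is to push the a.s.\ convergence through the time average via the uniform continuity supplied by Lemma \ref{lem1} and the compactness of $\bB$. The martingale CLT input itself is routine because the increments are uniformly bounded by \eqref{K}, and the studentization in \eqref{clt1} costs nothing extra once the ratio $I_n(\hat\theta_n)/I_n(\theta)\to 1$ is in hand.
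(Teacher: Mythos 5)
Your route is, in all essentials, the paper's own proof: a martingale CLT for $S_n(\theta)$, the Taylor identity \eqref{Taylor} plus the ratio convergences from \eqref{consistent} and Slutsky to get \eqref{clt1}, and a Ces\`aro argument showing $I_n(\theta)/n\to J^{*}(\theta)$ a.s.\ to get \eqref{clt2}. Your decomposition $J(\theta;\bv_i)=J^{*}(\hat\theta_{i-1})+\bigl[J(\theta;\bv_i)-J(\hat\theta_{i-1};\bv_i)\bigr]$ is the same triangle-inequality argument the paper uses, and the Ces\`aro step you worry about is handled exactly as you propose: each summand converges a.s.\ (by Lemma \ref{lem1}, compactness of $\bB$, and strong consistency from Theorem \ref{theo1}), so the deterministic Ces\`aro lemma applies pathwise; no law of large numbers is needed.

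The one genuine flaw is your parenthetical claim that the self-normalized convergence $S_n(\theta)/\sqrt{I_n(\theta)}\Rightarrow\cN(0,1)$ ``does not yet require'' the information-maximizing hypothesis. It does, and precisely at that stage. Every version of the martingale CLT you could cite (Billingsley's Theorem 35.12, Hall--Heyde Corollary 3.1) has as its central hypothesis that the normalized conditional variance $\sum_t \Exp[Y_{nt}^2\mid\cF_{t-1}]=I_n(\theta)/n$ converges in probability --- either to a constant, or to a positive random variable $\eta^2$ together with a nesting condition, in which case the limit of $S_n(\theta)/\sqrt{n}$ is a normal \emph{mixture} and self-normalization requires the stable-convergence machinery. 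Bounded increments (via \eqref{K}) and $I_n(\theta)\to\infty$ alone do not yield $S_n/\sqrt{\langle S\rangle_n}\Rightarrow\cN(0,1)$: for an arbitrary adaptive item selection one can in principle modulate the conditional variances $J(\theta;\bv_n)$ as a function of the past so that $I_n(\theta)/n$ fails to converge, and then no standard CLT applies. The convergence $I_n(\theta)/n\to J^{*}(\theta)$ is exactly where \eqref{item_select} enters, and it must be established \emph{before} the CLT is invoked --- which is the order in which the paper proceeds. Once you reorder your argument accordingly (prove the Ces\`aro limit first, then apply the martingale CLT with deterministic norming $\sqrt{n J^{*}(\theta)}$, then Taylor/Slutsky), everything you wrote goes through and coincides with the paper's proof.
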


\begin {proof}
We will denote $\{\hat{\bv}_{i}\}_{1 \leq i \leq n}$ as the information-maximizing item selection strategy \eqref{item_select}.  We will start by showing that   as $n \rightarrow \infty$
\begin{align}\label{result0}
 \frac{1}{n} I_n(\theta)=\sum_{i=1}^{n} J(\theta;\hat{\bv}_i)
 & \rightarrow J^{*}(\theta) \quad \Pro_\theta-\text{a.s}.
\end{align}
In  order to do so, it  suffices to show   that $
J(\theta;\hat{\bv}_n) \rightarrow J^{*}(\theta)$  $\Pro_\theta$-a.s.
Since $J(\theta; \bv)$ is jointly continuous and $\hat{\theta}_n$ a strongly consistent estimator of $\theta$, from Lemma \ref{lem1} we have
\begin{equation} \label{ddd}
 |J(\hat{\theta}_n;\hat{\bv}_n) -  J(\theta;\hat{\bv}_n)| \leq  \sup_{\bv \in \bB}   |J(\hat{\theta}_n; \bv) -  J(\theta;\bv)| \rightarrow 0 \quad \Pro_\theta-\text{a.s.}
\end{equation}
Therefore, we only need to show that
$J(\hat{\theta}_n;\hat{\bv}_n) \rightarrow J^{*}(\theta)$ $\Pro_\theta$-a.s.
But from the definition of  $(\hat{\bv}_n)$  in \eqref{item_select} we have that   $J(\hat{\theta}_{n-1}; \hat{\bv}_n)=J^{*}(\hat{\theta}_{n-1})$, therefore  from the triangle inequality we obtain:
\begin{align*}
|J(\hat{\theta}_n;\hat{\bv}_n)-J^{*}(\theta)|&\leq |J(\hat{\theta}_n;\hat{\bv}_n)-J(\hat{\theta}_{n-1};\hat{\bv}_n)|+|J^{*}(\hat{\theta}_{n-1})-J^{*}(\theta)|\\
&\leq\sup_{\bv\in \bB}|J(\hat{\theta}_n;\bv)-J(\hat{\theta}_{n-1};\bv)|+
|J^{*}(\hat{\theta}_{n-1})-J^{*}(\theta)|.
\end{align*}
Since $\hat{\theta}_{n}$ is a strongly  consistent estimator of $\theta$,
from  Lemma \ref{lem1} it follows that
$$
\sup_{\bv\in \bB} |J(\hat{\theta}_n;\bv)-J(\hat{\theta}_{n-1};\bv) | \rightarrow 0 \quad \Pro_\theta-\text{a.s.} ,
$$
whereas from  the continuity of $J^{*}$  we obtain
$J^{*}(\hat{\theta}_{n-1}) \rightarrow J^{*}(\theta)$ $\Pro_\theta-\text{a.s.}$,  which completes the proof of \eqref{result0}.

Now, from Proposition \ref{prop1} we know that  $\{S_n(\theta)\}_{n \in \bN}$ is a  martingale with bounded increments,  mean $0$ and predictable variation $I_n(\theta)$. Then, due to \eqref{result0}, we can apply the  Martingale Central Limit Theorem (see, e.g., \cite{billi}, Ex. 35.19, p. 481) and obtain
$$
\frac{S_n(\theta)}{\sqrt{I_n(\theta)}} \longrightarrow \cN(0,1).
$$
Using the  Taylor expansion \eqref{Taylor},  we  have
\begin{align*}
\frac{I_n( \tilde{\theta}_n)}{I_n(\theta)}\sqrt{I_n(\theta)} \, (\hat{\theta}_n-\theta)\rightarrow \cN(0,1),
\end{align*}
where $\tilde{\theta}_n$ lies between $\hat{\theta}_n$ and $\theta$.
But, from  \eqref{consistent} it follows that
$$
\frac{I_n( \tilde{\theta}_n)}{I_n(\theta)}  \rightarrow 1 \quad \Pro_\theta-\text{a.s.},
$$
thus,  from an application of Slutsky's theorem we obtain
 \begin{align} \label{clt3}
\sqrt{I_n(\theta)} \, (\hat{\theta}_n-\theta)  &\longrightarrow \cN(0,1).
\end{align}
Finally, from \eqref{clt3} and \eqref{result0} we obtain \eqref{clt2}, whereas from
from \eqref{clt3} and \eqref{consistent} we obtain  \eqref{clt1}, which completes the proof.
\end{proof}


\section{CAT with response revision}

In this section we  consider the design of  CAT when response revision is allowed. As before, we consider multiple-choice items that have $m$ categories and we assume that the total number of items that will be administered is fixed and equal to $n$. However, at any time during the test the examinee can go back and revise (i.e., change) the answer to a previous item.  The only  restriction that we impose is that each item can be revised at most $m-2$ times during the test. As a result, we now focus on items  with $m \geq 3$ categories, unlike the previous section where the case of binary items ($m=2$) was also included.  Moreover, due to the possibility of revisions,  the total number of responses (first answers and revisions)  that are observed, $\tau_n$,  is random,  even though the total number of administered items, $n$,  is fixed.  In any case,  $n \leq \tau_n \leq (m-1) n$, with the lower bound corresponding to the case of no revisions and the upper bound to the case that all items are revised as many times as possible.

\subsection{Setup}

In order to formulate the problem in more detail, suppose that at some point during the test we have collected  $t$ responses, let $f_{t}$ be the number of  distinct items that have been administered and $r_t:=t-f_{t}$  the number of revisions.  For each  item $i \in \{1, \ldots,f_{t}\}$, we denote $g_{t}^{i}$ as the  number of responses that correspond to this particular item. Since each item  can be revised up to $m-2$ times, we have
 $1\leq g_{t}^i \leq m-1$.

 After completing the $t^{th}$ response,  the examinee decides whether to  revise one of the previous items or to proceed to a new item. Specifically,  let $ C_{t}:=\{i\in \{1,\ldots, f_{t} \}: g_{t}^{i}<m-1\}$ be the set of items that can still be revised. The decision of the examinee  is then captured by the following random variable:
$$
d_{t} := \begin{cases}
 0, \quad \text{the $t+1^{th}$ response will correspond to a new item}  \\
  i, \quad \text{the $t+1^{th}$ response is a revision of item} \; i  \in C_{t}
 \end{cases} ,
 $$
 with the understanding that $d_{t}=0$ when $C_t=\emptyset$.  Then,
$\cG_{t}:= \sigma \left( f_{1:t} , d_{1:t} \right)$  is the $\sigma$-algebra that  contains all  information regarding the history of revisions, where for
compactness we write $f_{1:t}:= (f_{1}, \ldots, f_{t})$ and
$d_{1:t}:= (d_{1}, \ldots, d_{t})$.

Of course, we also observe the responses of the examinee during the test.  For each  item $i \in \{1, \ldots,f_{t}\}$,  let $A^{i}_{j-1}$  be the set of remaining  categories  just before the  $j^{th}$ attempt on this particular item, where $1 \leq j \leq  g_t^i$.  Thus,  $A^{i}_{0}=\{1, \ldots, m\}$ is the set of all categories and $A^{i}_{j-1}$ is a random set for $j >1$. Let $X_{j}^{i}$ be the response that corresponds to the   $j^{th}$ attempt, so that  $X_{j}^{i} = k$ if category $k$ is chosen  on the $j^{th}$ attempt on item $i$, where $k \in A^{i}_{j-1}$.  Then,
$$
\cF^X_{t}:= \sigma  \left( X_{1:g_t^i}^i, \;   \;  1 \leq  i \leq f_{t} \right), \quad \text{where} \quad X_{1:g_t^i}^i:=(X^i_1, \ldots, X^i_{g_t^i} ),
$$
is the $\sigma$-algebra that captures the information  from the observed responses and $ \cF_{t}:= \cG_{t} \vee \cF^X_{t}$ the $\sigma$-algebra that contains all the  available information up to this time.

\subsection{Modeling assumptions}
As in the case of the regular CAT that we considered in the previous section,
we assume that the first  response to each item is governed by the  nominal response model, so that for every $1 \leq i \leq f_t$ we have
 \begin{align} \label{nominal3}
 \Pro_{\theta} (   X^{i}_{1} =k  \, | \, \bv_{i}  ) &:= p_{k}(\theta; \bv_i), \quad
1 \leq k \leq m,
\end{align}
where   $\bv_i:=(a_{i2},\ldots, a_{im}, c_{i2},\ldots, c_{im})$  is a $\bB$-valued  vector that characterizes item $i$ and satisfies \eqref{trivial}-\eqref{identify}  and  $p_{k}(\theta; \bv_i)$ is the pmf of the nominal response model defined in \eqref{nominal2}. But we now further assume that revisions are  also governed  by the nominal response model, so that for every $2 \leq j \leq g_t^i$  we have
 \begin{align} \label{nominal4}
 \Pro_{\theta} \left( X_{j}^{i} =k \, | \,  X_{1:j-1}^{i} , \bv_{i}  \right)
&:= \frac{p_{k}(\theta; \bv_i)}{\sum_{h \in A^i_{j-1}} p_{h}(\theta; \bv_i)}, \quad k \in A^i_{j-1},
\end{align}
where  $X^i_{1:j} := (X_{1}^i, \ldots, X^i_{j})$. Moreover, we assume, as in the previous section, that responses coming from different items are conditionally independent, so that
 \begin{align} \label{independence2}
\Pro_{\theta} \left(   X^{i}_{1:g_{t}^{i}}  \; , \; 1 \leq  i \leq   f_{t} \,  | \,  d_{1:t}, \bv_{1: f_{t}}  \right) = \prod_{i=1}^{f_{t}}  \Pro_{\theta} \left(   X^{i}_{1:g_{t}^{i}} \, | \,  d_{1:t} , \bv_{i}  \right) ,
\end{align}
where for compactness we write $\bv_{1:f_t}:= (\bv_{1}, \ldots, \bv_{f_{t}})$. Finally, we additionally assume that the observed responses on any given item are  conditionally independent of the time during the test at which they were given. In other words,  for every $1 \leq i \leq f_t$ we have
\begin{align} \label{stationarity}
 \Pro_{\theta} \left(   X^{i}_{1:g_{t}^{i}} \, | \,  d_{1:t}, \bv_{i}  \right)
&= \Pro_{\theta} (   X^{i}_{1} \, | \, \bv_{i}  )     \cdot  \prod_{j=2}^{g_{t}^{i}}  \Pro_{\theta} \left( X_{j}^{i} \, | \,   X_{1:j-1}^{i} , \bv_{i}  \right) .
\end{align}

The above assumptions specify completely the probability in the left-hand side of \eqref{independence2}. Specifically, \eqref{independence2} and  \eqref{stationarity}  imply that
 \begin{align} \label{partial}
 \begin{split}
& \Pro_{\theta} \left(   X^{i}_{1:g_{t}^{i}}  \; , \; 1 \leq  i \leq   f_{t} \,  | \,  d_{1:t}, \bv_{1: f_{t}}  \right)   \\
&= \prod_{i=1}^{f_{t}}  \Pro_{\theta} \left(   X^{i}_{1} \, |  \bv_{i}  \right)    \prod_{j=2}^{g_{t}^{i}}  \Pro_{\theta} \left( X_{j}^{i} \, | \,   X_{1:j-1}^{i} , \bv_{i}  \right)
\end{split}
\end{align}
and the probabilities in the right-hand side are determined by the nominal response model according to \eqref{nominal3}-\eqref{nominal4}.  On the other hand, we do \textit{not} model  the decision of the examinee whether to revise or not at each step, i.e.,  we do not specify  $\Pro_{\theta} \left(   d_{1:t} \, | \,  \bv_{1: f_{t}} \right)$. While this probability may depend on $\theta$ and provide useful information for the ability of the examinee, its specification is a rather difficult task.  Nevertheless, the  above assumptions will be sufficient for the design and analysis of CAT that allows for response revision.

\subsection{Problem formulation}

As we mentioned in the beginning of the section, the  total number of administered items  is fixed and will be denoted  by $n$, as in the case of the regular CAT. However, due to the possibility of revision, the total number of responses will now be random and  denoted by $\tau_n$. Indeed, the test will stop when $n$ items have been distributed \textit{and} the examinee does not want to (or cannot) revise any more items. More formally,
$$
\tau_n:=\min\{t \geq 1: f_{t}=n \;  \text{and} \;  d_{t}=0\},
$$
which reveals that  $\tau_n$ is a  stopping time with respect to filtration $\{ \cG_{t}\}$, and of course $\{ \cF_{t}\}$.  Note that, for every $1 \leq i \leq n-1$,
$$
\tau_i:=\min\{t \geq 1: f_{t}=i \;  \text{and} \;  d_{t}=0\},
$$
is the time at which the $(i+1)^{th}$ item needs to be selected and its selection will depend on the  available information up to this time. That is, we will now say that  $(\bv_{i+1})_{1 \leq i \leq n-1}$ is an  \textit{item selection strategy} if the
parameter vector that characterizes the $(i+1)^{th}$ item, $\bv_{i+1}$,  is a $\bB$-valued, $\cF_{\tau_{i}}$-measurable  random vector. As in the case of the standard CAT, items need to be selected so that the accuracy of the
final  estimator of $\theta$, $\hat{\theta}_{\tau_n}$, be maximized. As in the previous section, a  reasonable approach is to select the items in order to maximize the  Fisher information of the nominal response model at the current ability estimate. Thus,  after each observation $t$ until the end of the test, we need   an  $\cF_{t}$-measurable  random variable,  $\hat{\theta}_{t}$, that will provide the current  estimate for the ability parameter, $\theta$.

\subsection{Adaptive ability estimation  based on a partial likelihood}

Our estimate for  $\theta$ after the first $t$ responses  will be the maximizer of the conditional log-likelihood of the acquired observations given the selected items and the revision strategy of the examinee:
\begin{align}
L_t (\theta)&:= \log \Pro_{\theta} \left(   X^{i}_{1:g_{t}^{i}} \, , \;  i=1, \ldots, f_{t} \,  \Big| \,  d_{1:t}, \; \bv_{1: f_{t}}  \right).
\end{align}
In order to lighten the notation,  for every $2 \leq j \leq g_t^i$
we will use the following notation
\begin{equation} \label{condi_nominal}
p_{k}(\theta; \bv_i \,| \, X^i_{1:j-1}) := \Pro_{\theta} \left( X_{j}^{i} =k \, | \,  X_{1:j-1}^{i} , \bv_{i}  \right), \quad k \in A^i_{j-1}
\end{equation}
 for the conditional probability  that is determined in \eqref{nominal3} and we will further use the following notation for the corresponding log-likelihood
 $$
\ell \left(\theta;\bv_i, X_{j}^{i}=k \,| \, X^{i}_{1:j-1} \right) :=   \log  p_{k}\left(\theta; \bv_i \, | \, X_{1:j-1}^i \right) ,  \quad k \in A^i_{j-1}.
$$
 Then  from \eqref{partial} we have
\begin{align}
\begin{split}
L_t (\theta)&= \sum_{i=1}^{f_t}  \Bigl[ \ell \left(\theta;\bv_i, X^i_{1} \right) +   \sum_{j=2}^{g_{t}^i} \ell(\theta;\bv_i, X_{j}^{i}|X^{i}_{1:j-1})  \Bigr],
\end{split}
\end{align}
where $\ell (\theta;\bv_i, X^i_{1})$ is defined according to \eqref{loglikeli} and the    corresponding score function takes the form
\begin{equation} \label{rev_score}
S_t(\theta) :=\frac{d}{d\theta} L_t(\theta)=
\sum_{i=1}^{f_{t}}  \Bigl[ s\left(\theta;\bv_i, X^i_{1} \right)+
 \sum_{j=2}^{g_{t}^i} s\left(\theta; \bv_i,X^i_j|X^{i}_{1:j-1} \right) \Bigr],
\end{equation}
where  $s (\theta;\bv_i, X^i_{1})$ is defined according to \eqref{score}
and   for every $2 \leq j \leq g_t^i$ we have
\begin{align*}
s(\theta;\mathbf{b}_i,X^i_j=k \, | \, X^{i}_{1:j-1}) &:= \frac{d}{d\theta}
\ell \left(\theta;\bv_i, X_{j}^{i}=k | \, X^{i}_{1:j-1} \right)  \\
& = \sum_{k \in A^i_{j-1}}  \Bigl( a_{ki} - \bar{a}(\theta;\bv_i|X^i_{1:j-1}) \Bigr) \, \ind{X^i_{j}=k} , \quad k \in A^i_{j-1} \\
\quad \text{and} \quad \bar{a}(\theta;\bv_i|X^i_{1:j-1}) &:=\sum_{k \in A^i_{j-1}} a_{ki} \;\ p_ {k}\left(\theta; \bv_i \, | \, X_{1:j-1}^i \right).
\end{align*}
Our estimate for $\theta$ after the first $t$ responses will be the root of $S_{t}(\theta)$. As in the case of the regular CAT, this root will exist  for every $t> t_{0}$, where $t_0$ is some random time. Thus, for $t \leq t_0$ we need an alternative estimating scheme. This,  however,  will not affect the asymptotic properties of our estimator as the  number of administered items, $n$, goes to infinity, which will be the focus on the remaining of this section.

\subsection{Asymptotic analysis}
Our asymptotic analysis will be based on the  martingale property of the score function, $S_{t}(\theta)$, which is established in the following proposition.

\begin{proposition}\label{prop2}
For any item selection strategy and any revision strategy, $\{S_{t}(\theta)\}_{t \in \bN}$ is a $(\Pro_{\theta},\{\cF_{t} \}_{t \in \bN})$-martingale with bounded increments, mean zero and  predictable variation
$\langle S(\theta)\rangle_t = I_t(\theta)$, where
\begin{equation} \label{new_fishern}
I_t(\theta) : =\sum_{i=1}^{f_{t}}
J(\theta;\bv_{i})+  I^{R}_t(\theta), \quad
 I^{R}_t(\theta)  := \sum_{i=1}^{f_{t}}   \sum_{j=2}^{g_t^i} J \left(\theta;\bv_{i} \, | \, X_{1:j-1}^i \right)  ,
\end{equation}
where  $ J(\theta;\bv_{i} )$ is defined in \eqref{fisher} and
\begin{align*}
J(\theta;\bv_i |X^i_{1:j-1} ) &:= \Exp_{\theta}[ s^{2}(\theta; \bv_i, X^i_{j} |X^i_{1:j-1})]\\
&= \sum_{k\in A^i_{j-1}} \Bigl(a_k-\bar{a}(\theta;\bv_i \, |X^i_{1:j-1}) \Bigr)^2 \; p_k(\theta;\bv_i \,  |X^i_{1:j-1}).
\end{align*}
Finally, for any $\tilde{\theta}$ we have
\begin{equation} \label{deriv}
S'(\tilde{\theta})=\frac{d}{d\theta} S_{t}(\theta)\Big|_{\theta=\tilde{\theta}}=-I_t(\tilde{\theta}).
\end{equation}

\end{proposition}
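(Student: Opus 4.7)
The proof follows the template of Proposition \ref{prop1}, but it must be adapted to the two new features of the revision setting: a single item can contribute multiple terms to $S_{t}(\theta)$ (one per attempt), and the filtration $\{\cF_{t}\}$ now carries both response and revision-decision information. The plan is to isolate the single response that is added between times $t-1$ and $t$, identify the corresponding increment of $S_{t}(\theta)$, and then verify the martingale, variance and derivative properties one at a time.

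The first step is a careful bookkeeping of the increment $\Delta_{t} := S_{t}(\theta) - S_{t-1}(\theta)$. By construction, going from $t-1$ to $t$ adds exactly one new response, dictated by $d_{t-1}$, which is $\cG_{t-1}\subset\cF_{t-1}$-measurable. If $d_{t-1}=0$, a brand new item is administered, so $f_{t}=f_{t-1}+1$ and $\Delta_{t} = s(\theta;\bv_{f_{t}},X^{f_{t}}_{1})$, where $\bv_{f_{t}}$ is $\cF_{\tau_{f_{t-1}}}$-measurable and hence $\cF_{t-1}$-measurable. If instead $d_{t-1}=i\in C_{t-1}$, item $i$ is revised, so $g^{i}_{t}=g^{i}_{t-1}+1$ and $\Delta_{t} = s(\theta;\bv_{i}, X^{i}_{g^{i}_{t}} \,|\, X^{i}_{1:g^{i}_{t}-1})$; again $\bv_{i}$ and $X^{i}_{1:g^{i}_{t}-1}$ are $\cF_{t-1}$-measurable, so the sole randomness in $\Delta_{t}$ given $\cF_{t-1}$ sits in the new response.

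Step two is the martingale property. In case $d_{t-1}=0$, applying the calculation from the proof of Proposition \ref{prop1} to the pmf $p_{k}(\theta;\bv_{f_{t}})$ gives $\Exp_{\theta}[\Delta_{t}\mid\cF_{t-1}]=0$. In the revision case, the same calculation applied to the conditional pmf in \eqref{condi_nominal} — which is itself a valid probability mass function on $A^{i}_{g^{i}_{t}-1}$ — yields $\Exp_{\theta}[\Delta_{t}\mid\cF_{t-1}]=0$. Boundedness is immediate: each score increment is bounded by $2a^{*}(\bv)$ for the relevant $\bv\in\bB$, which in turn is bounded uniformly thanks to \eqref{K}. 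Step three computes the predictable variation. By the Fisher-information identity \eqref{fisher}, applied either to $p_{k}(\theta;\bv_{f_{t}})$ or to the conditional pmf \eqref{condi_nominal}, one obtains $\Exp_{\theta}[\Delta_{t}^{2}\mid\cF_{t-1}]$ equal to $J(\theta;\bv_{f_{t}})$ or to $J(\theta;\bv_{i}\mid X^{i}_{1:g^{i}_{t}-1})$, respectively. Summing these conditional variances across $t$, and grouping the contributions by whether the step opened a new item or revised an existing one, reproduces exactly the split $I_{t}(\theta)=\sum_{i=1}^{f_{t}} J(\theta;\bv_{i}) + I^{R}_{t}(\theta)$ in \eqref{new_fishern}.

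The derivative formula \eqref{deriv} follows from the observation that each summand of $S_{t}(\theta)$ is itself a score of some (possibly conditional) log-likelihood; differentiating once more and invoking \eqref{derivative} at both the unconditional and the conditional level yields $-J(\theta;\bv_{i})$ and $-J(\theta;\bv_{i}\mid X^{i}_{1:j-1})$ respectively, and summation gives $-I_{t}(\tilde{\theta})$. The main obstacle in writing the proof cleanly is not probabilistic — the martingale argument is essentially that of Proposition \ref{prop1} applied to whichever (conditional) pmf governs the current response — but notational: one must track the two time indices $t$ (responses) and $i$ (items) simultaneously and unambiguously identify, for each $t$, whether a new item or a revision is being added. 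Once the increment $\Delta_{t}$ is written in the unified form above, all three conclusions reduce to standard Fisher-information computations applied to the appropriate conditional probability.
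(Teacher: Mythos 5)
Your proposal is correct and follows essentially the same route as the paper: decompose the increment $S_{t}(\theta)-S_{t-1}(\theta)$ according to whether $d_{t-1}=0$ or $d_{t-1}=i\in C_{t-1}$, use the $\cF_{t-1}$-measurability of $d_{t-1}$, $f_{t}$, $g_{t}^{i}$ and the item parameters to reduce the conditional mean and second moment to the score and Fisher-information identities of the (conditional) nominal response model, and sum over $t$ to recover the split of $I_{t}(\theta)$ in \eqref{new_fishern}. Your explicit verification of the bounded-increments claim and of the derivative formula \eqref{deriv} is if anything slightly more complete than the paper's own write-up, which leaves those two points implicit.
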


\begin{proof}
After having completed the $t-1^{th}$ response,  the examinee  either  proceeds with a new item or chooses to revise a previous item. Therefore,
the difference $S_{t}(\theta)- S_{t-1}(\theta)$ admits the following decomposition:
\begin{align} \label{decompose}
&s \left(\theta; \bv_{f_t}, X_{1}^{f_{t}} \right)  \, \ind{d_{t-1}=0} +     \sum_{i \in C_{t-1}} s\left(\theta; \bv_{i}, X^{i}_{g_{t}^{i}} | X^i_{1:g_{t}^{i}-1} \right) \, \ind{d_{t-1}=i} ,
\end{align}
where the sum is null when $C_{t-1} =\emptyset$.  Since $d_{t-1}, C_{t-1}$ are $\cF_{t-1}$-measurable, taking conditional expectations  with respect to $\cF_{t-1}$ we obtain
\begin{align*}
\Exp_{\theta}[S_{t}(\theta)- S_{t-1}(\theta) |\cF_{t-1}] &=
\Exp_{\theta} \left[ s \left(\theta; \bv_{f_t}, X_{1}^{f_{t}}  \right)  \, \Big|\, \cF_{t-1} \right]  \, \ind{d_{t-1}=0} \\
& +  \sum_{i \in C_{t}}  \Exp_{\theta} \left[   s\left(\theta; \bv_{i}, X^{i}_{g_{t}^{i}} | X^i_{1:g_{t}^{i}-1} \right)  \,   \Big| \, \cF_{t-1} \right] \, \ind{d_{t-1}=i}.
\end{align*}
Since  $f_{t}$ is $\cF_{t-1}$-measurable, it follows that
$$
\Exp_{\theta} \left[ s \left(\theta; \bv_{f_t}, X_{1}^{f_{t}}  \right)  \, \Big|\, \cF_{t-1} \right]=0
$$
and since $g_{t}^{i}$ is also  $\cF_{t-1}$-measurable , it follows that
$$
\Exp_{\theta} \left[   s\left(\theta; \bv_{i}, X^{i}_{g_{t}^{i}} | X^i_{1:g_{t}^{i}-1} \right)  \,   \Big| \, \cF_{t-1} \right]=0,
$$
which proves that $S_{t}(\theta)$ is a zero-mean martingale with respect to $(\Pro_{\theta},\{\cF_{t} \}_{t \in \bN})$.  Now, taking squares in \eqref{decompose} we obtain
\begin{align*}
& \Exp_{\theta}[ \left(S_{t}(\theta)- S_{t-1}(\theta) \right)^{2}|\cF_{t-1}] \\
&= J( \theta; \bv_{f_{t}})  \, \ind{d_{t-1}=0} +  \sum_{i \in C_{t-1}}   J \left(\theta; \bv_{i} | X^i_{1:g_{t}^{i}-1} \right)  \, \ind{d_{t-1}=i}
\end{align*}
and consequently the predictable variation of $S_t(\theta)$ will be
\begin{align*}
\langle S(\theta)\rangle_t &= \sum_{v=1}^{t} \Exp_{\theta} \left[ \left(S_{v}(\theta)- S_{v-1}(\theta) \right)^{2}|\cF_{v-1} \right]  \\
&=  \sum_{v=1}^{t} \left[  J( \theta; \bv_{f_{v}})  \, \ind{d_{v-1}=0} +  \sum_{j \in C_{v-1}}   J \left(\theta; \bv_{j} | X^{j}_{1:g_{v-1}^{j}} \right)  \, \ind{d_{v-1}=j} \right] \\
&=\sum_{i=1}^{f_{t}}  \left[  J(\theta;\bv_{i})
+  \sum_{h=2}^{g_{t}^i}  J(\theta;\bv_{i},h) \right]=: I_{t}^{R}.
\end{align*}
\end{proof}

We can now establish the strong consistency of $\hat{\theta}_{\tau_n}$ as $n \rightarrow \infty$ without any conditions on the item selection or the revision strategy.

\begin{thm} \label{LLNCAT}
For any item selection method and any revision strategy,  as $n \rightarrow \infty$ we have
\begin{equation} \label{revi_consistent}
  \hat{\theta}_{\tau_n} \rightarrow \theta \quad \text{and} \quad
 \frac{I_{\tau_n}(\hat{\theta}_{\tau_n})}{I_{\tau_{n}}(\theta)}   \rightarrow 1
\quad \Pro_{\theta}\text{-a.s.}
\end{equation}
\end{thm}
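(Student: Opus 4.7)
My plan is to follow the template of the proof of Theorem \ref{theo1}, adapted to handle the random stopping time $\tau_n$ and the extra revision contributions in both the score and the predictable variation. The four ingredients are: a Martingale SLLN applied at $\tau_n$, a Taylor expansion of $S_{\tau_n}$ around $\hat\theta_{\tau_n}$, a lower bound on the information ratio, and a contradiction argument showing that $\{\hat\theta_{\tau_n}\}$ is almost surely bounded.

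By Proposition \ref{prop2}, $\{S_t(\theta)\}$ is a zero-mean $(\Pro_\theta,\{\cF_t\})$-martingale with bounded increments and predictable variation $I_t(\theta)$. Since $f_{\tau_n}=n$ and the revision terms in \eqref{new_fishern} are nonnegative, $I_{\tau_n}(\theta)\geq nJ_*(\theta)\to\infty$; combined with $\tau_n\to\infty$ (as $\tau_n\geq n$), the Martingale SLLN yields $S_{\tau_n}(\theta)/I_{\tau_n}(\theta)\to 0$ $\Pro_\theta$-a.s. A Taylor expansion of $S_{\tau_n}$ around $\hat\theta_{\tau_n}$, combined with \eqref{deriv}, then produces
\begin{align*}
\frac{I_{\tau_n}(\tilde\theta_n)}{I_{\tau_n}(\theta)}\,(\hat\theta_{\tau_n}-\theta)\to 0 \quad \Pro_\theta\text{-a.s.},
\end{align*}
with $\tilde\theta_n$ between $\hat\theta_{\tau_n}$ and $\theta$. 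Because $\tau_n\leq (m-1)n$ and each conditional Fisher information term admits a deterministic uniform upper bound $K'$ analogous to the $K$ in \eqref{K}, we have $I_{\tau_n}(\theta)\leq n[K+(m-2)K']=:nM$, while $I_{\tau_n}(\tilde\theta_n)\geq nJ_*(\tilde\theta_n)$. Hence the ratio is at least $J_*(\tilde\theta_n)/M$, and strong consistency will follow once $\{\hat\theta_{\tau_n}\}$ is shown to be a.s.\ bounded.

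To establish $\Pro_\theta(\limsup_n|\hat\theta_{\tau_n}|=\infty)=0$, I would argue by contradiction as in Theorem \ref{theo1}. Using $s(\theta;\bv,X)-s(\hat\theta;\bv,X)=\bar{a}(\hat\theta;\bv)-\bar{a}(\theta;\bv)$ and its conditional analogue for revisions, I decompose $S_{\tau_{n_k}}(\theta)-S_{\tau_{n_k}}(\hat\theta_{\tau_{n_k}})$ into $n_k$ first-response differences plus at most $(m-2)n_k$ revision differences. Along a subsequence where $\hat\theta_{\tau_{n_k}}\to\infty$, each first-response summand has limit $a^*(\bv_i)-\bar{a}(\theta;\bv_i)>0$ and each revision summand has limit $\max_{k\in A^i_{j-1}}a_{ik}-\bar{a}(\theta;\bv_i\,|\,X^i_{1:j-1})>0$, the latter strict because $|A^i_{j-1}|\geq 2$ (each item is revised at most $m-2$ times). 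Normalizing by $I_{\tau_{n_k}}(\theta)\leq n_k M$, the first-response sum alone delivers a strictly positive liminf, contradicting $S_{\tau_n}(\theta)/I_{\tau_n}(\theta)\to 0$. The case $\liminf_n\hat\theta_{\tau_n}=-\infty$ is symmetric.

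The main technical obstacle I anticipate is justifying the uniform-in-$\bv$ limits used above, which are needed to pass an infimum inside the liminf. I would appeal to Dini's theorem: $\bar{a}(\hat\theta;\bv)$ is continuous in $\bv$, strictly increasing in $\hat\theta$ (its $\hat\theta$-derivative equals $J(\theta;\bv)>0$), and has continuous pointwise limit $a^*(\bv)$ on the compact set $\bB$, so the monotone pointwise convergence is actually uniform. The same reasoning, applied separately for each of the finitely many subsets $A\subsetneq\{1,\ldots,m\}$ with $|A|\geq 2$, handles the conditional versions. Finally, for the second claim in \eqref{revi_consistent}, once $\hat\theta_{\tau_n}\to\theta$ is in hand, Lemma \ref{lem1} applied to $J(\cdot;\bv)$ and to each $J(\cdot;\bv\,|\,A)$, summed over the finitely many possible $A$, yields $|I_{\tau_n}(\hat\theta_{\tau_n})-I_{\tau_n}(\theta)|/I_{\tau_n}(\theta)\to 0$ almost surely, in direct analogy with the last step of the proof of Theorem \ref{theo1}.
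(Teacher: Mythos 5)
Your proposal is correct and follows essentially the same route as the paper's proof: the Martingale SLLN applied at $\tau_n$ (via optional sampling), the Taylor expansion combined with \eqref{deriv}, the lower bound $I_{\tau_n}(\tilde{\theta})/I_{\tau_n}(\theta)\geq J_{*}(\tilde{\theta})/((m-1)K)$, the subsequence contradiction in which only the first-response sum needs a strictly positive liminf, and Lemma \ref{lem1} for the second claim (your Dini's-theorem justification of the uniform convergence $\bar{a}(\hat{\theta};\bv)\to a^{*}(\bv)$ on the compact set $\bB$ is an acceptable substitute for the paper's appeal to Lemma \ref{lem1}). The one inaccuracy is your assertion that each revision summand has a \emph{strictly} positive limit because $|A^i_{j-1}|\geq 2$ --- the $a_{ik}$'s restricted to $A^i_{j-1}$ may all coincide, so that limit can equal zero --- but this is harmless since, exactly as in the paper, the revision terms only need $\liminf\geq 0$ (guaranteed by the monotonicity of $\theta\mapsto\bar{a}(\theta;\bv\,|\,X_{1:j-1})$), while the first-response sum carries the contradiction.
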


\begin{proof}
From Proposition \ref{prop2} we have that  $S_{t}(\theta)$ is a $(\Pro_\theta, \{\cF_{t} \} )$-martingale.   Moreover,  $(\tau_n)_{n \in \bN}$ is a strictly  increasing  sequence of  (bounded) $\{\cF_{t} \} )$-stopping times.   Then, from an application of the Optional Sampling Theorem it follows that
$S_{\tau_n}(\theta)$ is a $(\Pro_\theta, \{\cF_{\tau_n} \} )$-martingale with predictable variation $I_{\tau_n}(\theta)$. Moreover,
from \eqref{new_fishern} we have  $I_{\tau_n}(\theta) \geq n J_{*}(\theta)\rightarrow\infty$, therefore from the Martingale Strong Law of Large Number (\cite{willi}, p. 124 ) it follows that
\begin{align}\label{scoreconverge}
\frac{S_{\tau_n}(\theta)}{I_{\tau_n}(\theta)}\rightarrow 0 \quad \Pro_\theta-\text{a.s.}
\end{align}
Then, by a Taylor expansion around $\theta$ and \eqref{deriv} we have
\begin{align} \label{revi_taylor}
\begin{split}
 0= S_{\tau_n}(\hat{\theta}_{\tau_n}) &=   S_{\tau_n}(\theta)+
 S^{'}_{\tau_n}(\tilde{\theta}_{\tau_n})(\hat{\theta}_{\tau_n}-\theta)  \\
&=S_{\tau_n}(\theta) -I_{\tau_n}(\tilde{\theta}_{\tau_n})(\hat{\theta}_{\tau_n}-\theta),
\end{split}
\end{align}
where $\tilde{\theta}_{\tau_n}$ lies between $\hat{\theta}_{\tau_n}$ and $\theta$, and   \eqref{scoreconverge} takes the form
\begin{align*}
\frac{I_{\tau_n}(\tilde{\theta}_{\tau_n})}{I_{\tau_{n}}(\theta)} \, (\hat{\theta}_{\tau_n}-\theta)\rightarrow 0 \quad \Pro_\theta-\text{a.s.}
\end{align*}
However, since $\tau_n \leq (m-1)n$ and $\, J_{*}(\theta)  f_t \leq I_{t}(\theta) \leq K t$ for every $t$, where $K$ is defined in \eqref{K}, we have
$$
\frac{I_{\tau_n}(\tilde{\theta}_{\tau_n})}{I_{\tau_{n}}(\theta)} \geq
\frac{n J_{*}(\tilde{\theta}_{\tau_n})}{\tau_{n} K } \geq \frac{1}{m-1} J_{*}(\tilde{\theta}_{\tau_n})
$$
and it suffices to show that
\begin{align}\label{bound_RCAT}
\limsup_n |\hat{\theta}_{\tau_n}|<\infty \quad \Pro_\theta-\text{a.s.}
\end{align}
Now, for large $n$ we have $S_{\tau_n}(\hat{\theta}_{\tau_n})=0$  and  \eqref{scoreconverge} can be rewritten as follows
\begin{align} \label{look}
\frac{S_{\tau_n}(\theta)-S_{\tau_n}(\hat{\theta}_{\tau_n})} {I_{\tau_n}(\theta)}
&\rightarrow 0\quad \Pro_\theta-\text{a.s.}
\end{align}
But from the definition of the score function in \eqref{rev_score}
it follows that
\begin{align*}
& S_{\tau_n}(\theta)-S_{\tau_n}(\hat{\theta}_{\tau_n})  \\
&=  \sum_{i=1}^{n}   \left[ \left( s(\theta;\bv_i)- s(\hat{\theta}_{\tau_n};\bv_i) \right)+  \sum_{j=2}^{g_{\tau_n}^{i}}
\left( s(\theta;\bv_i, X^i_j|X^{i}_{1:j-1})-s(\hat{\theta}_{\tau_n};\bv_i, X^{i}_{j}|X^{i}_{1:j-1}) \right)  \right]\\
&= \sum_{i=1}^{n}   \left[ \left( \bar{\alpha}(\hat{\theta}_{\tau_n};\bv_i)-\bar{\alpha}(\theta;\bv_i) \right)
+   \sum_{j=2}^{g_{\tau_n}^{i}}
\left( \bar{\alpha}(\hat{\theta}_{\tau_n};\bv_i|X^{i}_{1:j-1})-\bar{\alpha}(\theta;\bv_i|X^{i}_{1:j-1}) \right)  \right]\\
&\geq n \, \inf_{\bv\in \bB} [\bar{\alpha}(\hat{\theta}_{\tau_n};\bv)-\bar{\alpha}(\theta;\bv)] \\
&+ (\tau_n-n) \min_{2 \leq j \leq m-1} \min_{X^i_{1:j-1}}  \inf_{\bv\in \bB }[ \bar{\alpha}(\hat{\theta}_{\tau_n};\bv\, |\, X^i_{1:j-1})-\bar{\alpha}(\theta;\bv|X^i_{1:j-1})] .
\end{align*}
On the other hand,  $I_{\tau_n}(\theta) \leq  \tau_n  K$, which implies that
\begin{align*}
\frac{S_{\tau_n}(\theta)-S_{\tau_n}(\hat{\theta}_{\tau_n})} {I_{\tau_n}(\theta)} &\geq  \frac{1}{K}\inf_{\bv\in \bB} [\bar{\alpha}(\hat{\theta}_{\tau_n};\bv)-\bar{\alpha}(\theta;\bv)] \\
&+ \frac{1}{K}  \min_{2 \leq j \leq m-1} \, \min_{X_{1:j-1}}  \, \inf_{\bv\in \bB } \left[ \bar{\alpha}(\hat{\theta}_{\tau_n};\bv\, |\, X_{1:j-1})-\bar{\alpha}(\theta;\bv|X_{1:j-1}) \right] ,
\end{align*}
where $X_{1:j-1}:= (X_{1}, \ldots, X_{j-1})$ is a vector of $j-1$ responses on an item with parameter $\bv$.  Then, on the event $\{\limsup_{n} \hat{\theta}_{\tau_n} \rightarrow \infty \}$ there exists a subsequence $(\hat{\theta}_{\tau_{n_j}})$ of $(\hat{\theta}_{\tau_n})$ so that  $\hat{\theta}_{\tau_{n_j}} \rightarrow \infty$ and, consequently,
\begin{align*}
\liminf_{n_j \rightarrow \infty}  \, \inf_{\bv\in \bB} \left[\bar{\alpha}(\hat{\theta}_{\tau_{n_j}};\bv)-\bar{\alpha}(\theta;\bv) \right] &>0
\end{align*}
whereas  for any  $2 \leq j \leq m-1$ and $X_{1:j-1}$  we have
\begin{align*}
\liminf_{n_j \rightarrow \infty}  \, \inf_{\bv\in \bB} \left[\bar{\alpha}(\hat{\theta}_{\tau_{n_j}};\bv \, |\, X_{1:j-1})-\bar{\alpha}(\theta;\bv \, |\, X_{1:j-1}) \right] &\geq 0.
\end{align*}
Therefore, we  conclude that
\begin{align*}
\liminf_{n_j} \frac{S_{\tau_{n_j}}(\theta)-S_{\tau_{n_j}}(\hat{\theta}_{\tau_{n_j}})} {I_{\tau_{n_j}}(\theta)} &>0
\end{align*}
and  comparing with  \eqref{look} we have that $\Pro(\limsup_{n} \hat{\theta}_{\tau_n} = \infty)=0$. Similarly we can show that $\Pro(\limsup_{n} \hat{\theta}_{\tau_n} = -\infty)=0$, which proves
\eqref{bound_RCAT} and, consequently,   the strong consistency of  $\hat{\theta}_{\tau_{n}}$ as $n \rightarrow \infty$.   In order to prove the second claim of the theorem,  we need to show that
\begin{equation} \label{second_part}
\frac{| I_{\tau_{n}}(\hat{\theta}_{\tau_{n}})-I_{\tau_{n}}(\theta)|}{I_{\tau_{n}}(\theta)}
\end{equation}
goes to 0 $\Pro_{\theta}$-\text{a.s.}  as $n \rightarrow \infty$. But $I_{\tau_{n}}(\theta) \geq n \, J_{*}(\theta)$, whereas $| I_{\tau_{n}}(\hat{\theta}_{\tau_{n}})-I_{\tau_{n}}(\theta)|$ is bounded above by
\begin{align*}
&  \sum_{i=1}^{n}   |J(\hat{\theta}_{\tau_{n}}; \bv_i)- J(\theta; \bv_i)|
+ \sum_{i=1}^{n} \sum_{j=2}^{g^i_{\tau_n}}
\Big| J(\hat{\theta}_{\tau_{n}}; \bv_i | X^i_{1:j-1})  - J(\theta; \bv_i |  X^i_{1:j-1} )  \Big| \\
&\leq n  \, \sup_{\bv\in \bB} |J(\hat{\theta}_{\tau_{n}}; \bv)-J(\theta;\bv)| \\
&+ (\tau_n-n) \, \max_{2 \leq j \leq m-1} \; \max_{X_{1:j-1} } \; \sup_{\bv \in \bB}    \Big| J(\hat{\theta}_{\tau_{n}}; \bv|X_{1:j-1}) - J(\theta; \bv |X_{1:j-1})\Big| ,
\end{align*}
where again $X_{1:j-1}:= (X_{1}, \ldots, X_{j-1})$ is a vector of $j-1$ responses on an item with parameter $\bv$.
Therefore, the ratio in \eqref{second_part} is bounded above by
\begin{align*}
&\frac{1}{J_{*}(\theta)}  \sup_{\bv\in \bB}|J(\hat{\theta}_t;\bv)-J(\theta;\bv)|\\
& +   \frac{m-2}{J_{*}(\theta)}   \max_{2 \leq j \leq m-1} \; \max_{X_{1:j-1} } \, \sup_{\bv \in \bB}    \Big| J(\hat{\theta}_{\tau_{n}}; \bv|X_{1:j-1}) - J(\theta; \bv |X_{1:j-1}) \Big| .
\end{align*}
But  we can show as in Theorem \ref{theo1} that
$$ \sup_{\bv\in \bB} \,  | J(\hat{\theta}_{\tau_{n}};\bv)-J(\theta;\bv)|  \rightarrow 0
 \quad \Pro_{\theta}-\text{a.s.}
$$
and, similarly,  due to the strong consistency of $\hat{\theta}_{\tau_{n}}$ and the continuity of $\theta \rightarrow J(\theta, \bv\, | \, X_{1:j-1})$, we can apply  Lemma \ref{lem1} and show that  for every $2 \leq j \leq m-1$ we have
$$
\sup_{\bv\in \bB}  \Big| J(\hat{\theta}_{\tau_{n}} ; \bv \, | \, X_{1:j-1}) - J(\theta; \bv \, |\, X_{1:j-1}) \Big| \rightarrow 0
 \quad \Pro_{\theta}-\text{a.s.}
$$
This implies that \eqref{second_part}  goes to 0 a.s. and completes the proof.
\end{proof}

While we established  the  strong consistency of $\hat{\theta}_{\tau_n}$ without any conditions, its  asymptotic normality requires certain conditions on the item selection strategy  and the number of revisions.  Indeed, in order to  apply the Martingale Central Limit theorem, as we did in the case of the regular CAT, we need to make sure that
\begin{equation} \label{decompose3}
\frac{1}{n} I_{\tau_n}(\theta)=  \frac{1}{n} \sum_{i=1}^{n}J(\theta,\bv_i)+
\frac{1}{n} I^{R}_{\tau_n}(\theta)
\end{equation}
converges in probability, where $I^{R}$ is the part of the Fisher information due to revisions (recall \eqref{new_fishern}). If we select each item in order to maximize the  Fisher information at the current estimate of the ability level, i.e.,
\begin{align} \label{item_select2}
\hat{\bv}_{i+1} \in \argmax_{\bv\in \bB} J(\hat{\theta}_{\tau_{i}};\bv),\quad i=1, \ldots, n-1,
\end{align}
where $J(\theta;b)$ is the Fisher information of the nominal response model, defined in \eqref{fisher}, then  we can show as in the case of the regular CAT that
\begin{align*}
\frac{1}{n} \sum_{i=1}^{n}J(\theta, \hat{\bv}_i)\rightarrow J^{*}(\theta) \quad \Pro_\theta-a.s.
\end{align*}
However,  the item selection strategy does not control the second term in
\eqref{decompose3}. Nevertheless,  we can see that
$$
\frac{1}{n} I^{R}_{\tau_n}(\theta) \leq  \frac{K \, (\tau_n-n)}{n},
$$
which implies that  $I^{R}_{\tau_n}(\theta)/n$ will converge to 0 in probability as long as  the number of revisions is small relative to the total number of items, in the sense that  $(\tau_n-n)/n$ goes to 0 in probability, i.e.,  $\tau_n-n=o_p(n)$. This is the content of the following theorem.

\begin{thm}\label{CLTRCAT}
If  $I_{\tau_n}(\theta)/n$ converges in probability, then
\begin{align} \label{rev_asy_nor}
\sqrt{I_{\tau_n}(\hat{\theta}_{\tau_n})} \, (\hat{\theta}_{\tau_n}-\theta) & \rightarrow \cN(0,1).
\end{align}
This is true in particular when the information-maximizing  item selection strategy \eqref{item_select2} is used and the number of revisions is much smaller than the number of items, in the sense that $\tau_n-n=o_p(n)$, in which case we have
\begin{align} \label{rev_asy_nor2}
\sqrt{n}(\hat{\theta}_{\tau_n}-\theta) & \rightarrow \cN\left( 0, [ J^{*}(\theta)]^{-1} \right).
\end{align}
\end{thm}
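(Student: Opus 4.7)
The plan is to follow the template of Theorem \ref{theo3}, replacing the deterministic index $n$ by the stopping time $\tau_n$ and replacing almost-sure convergence of the normalized predictable variation by convergence in probability. By Proposition \ref{prop2} together with the Optional Sampling Theorem, $\{S_{\tau_n}(\theta)\}_{n\in\bN}$ is a mean-zero martingale with bounded increments and predictable variation $I_{\tau_n}(\theta)$. Since $I_{\tau_n}(\theta)\geq n J_{*}(\theta)\to\infty$ and $I_{\tau_n}(\theta)/n$ converges in probability by assumption, I can invoke the Martingale Central Limit Theorem for stopped martingales (the bounded-increments hypothesis automatically takes care of the Lindeberg condition) to obtain
\begin{equation*}
\frac{S_{\tau_n}(\theta)}{\sqrt{I_{\tau_n}(\theta)}} \longrightarrow \cN(0,1).
\end{equation*}

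Next, I would use the Taylor expansion \eqref{revi_taylor} from the proof of Theorem \ref{LLNCAT}, which yields
\begin{equation*}
\frac{I_{\tau_n}(\tilde{\theta}_{\tau_n})}{I_{\tau_n}(\theta)}\sqrt{I_{\tau_n}(\theta)}(\hat{\theta}_{\tau_n}-\theta) \longrightarrow \cN(0,1),
\end{equation*}
where $\tilde{\theta}_{\tau_n}$ lies between $\hat{\theta}_{\tau_n}$ and $\theta$. The strong consistency of $\hat{\theta}_{\tau_n}$ from Theorem \ref{LLNCAT}, coupled with the same argument used there to compare $I_{\tau_n}(\hat{\theta}_{\tau_n})$ with $I_{\tau_n}(\theta)$ via Lemma \ref{lem1}, gives $I_{\tau_n}(\tilde{\theta}_{\tau_n})/I_{\tau_n}(\theta)\to 1$ $\Pro_\theta$-a.s. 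An application of Slutsky's theorem then produces $\sqrt{I_{\tau_n}(\theta)}(\hat{\theta}_{\tau_n}-\theta)\to\cN(0,1)$, and a further application combined with the second relation in \eqref{revi_consistent} yields \eqref{rev_asy_nor}.

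For the second claim, I would show that under the information-maximizing strategy \eqref{item_select2} and the condition $\tau_n-n=o_p(n)$ we have $I_{\tau_n}(\theta)/n\to J^{*}(\theta)$ in probability, so that the first part applies and \eqref{rev_asy_nor} can be translated into \eqref{rev_asy_nor2}. Using the decomposition \eqref{decompose3}, the first term $n^{-1}\sum_{i=1}^{n}J(\theta;\hat{\bv}_i)$ converges $\Pro_\theta$-a.s.\ to $J^{*}(\theta)$ by exactly the same argument used to prove \eqref{result0} in Theorem \ref{theo3} (the argument there only relies on the strong consistency of the adaptive estimator and on Lemma \ref{lem1}, both of which remain valid here). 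The second term is handled by the deterministic bound $n^{-1} I^{R}_{\tau_n}(\theta)\leq K(\tau_n-n)/n$ from \eqref{K} and \eqref{new_fishern}, which together with $\tau_n-n=o_p(n)$ forces $I^{R}_{\tau_n}(\theta)/n\to 0$ in probability. Combining the two terms gives $I_{\tau_n}(\theta)/n\to J^{*}(\theta)$ in probability, and a final application of Slutsky's theorem to \eqref{rev_asy_nor} produces \eqref{rev_asy_nor2}.

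The only delicate point is the Martingale CLT step, because the index is random and the predictable variation is only known to converge in probability rather than almost surely. This is addressed by appealing to the stopped-martingale version of the MCLT (e.g.\ \cite{billi}, Ex.\ 35.19), whose hypotheses reduce here to the bounded increment condition (immediate from $|s(\theta;\bv,X)|\leq 2K$) and to $I_{\tau_n}(\theta)/n\to c$ in probability with $c>0$; everything else is a direct adaptation of the regular-CAT proofs, with $n$ replaced by $\tau_n$ throughout.
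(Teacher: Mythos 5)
Your proposal is correct and follows essentially the same route as the paper's proof: the paper likewise reduces \eqref{rev_asy_nor} to a martingale CLT for the stopped score (handling the random index via the martingale-difference array $Y_{nt}=(S_t(\theta)-S_{t-1}(\theta))n^{-1/2}\ind{t\leq\tau_n}$, with the Lindeberg condition trivially satisfied by the bounded increments), then uses the Taylor expansion \eqref{revi_taylor}, the consistency results \eqref{revi_consistent}, and Slutsky's theorem, and finally obtains \eqref{rev_asy_nor2} from the decomposition \eqref{decompose3} exactly as you describe. The only cosmetic difference is that the paper writes out the array construction explicitly rather than citing a ready-made stopped-martingale CLT, but the content is identical.
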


\begin{proof}
We will first show that  if $I_{\tau_n}(\theta)/n$ converges in probability, then
\begin{align}\label{CLT0}
\frac{S_{\tau_n}(\theta)}{\sqrt{I_{\tau_n}(\theta)}} &\rightarrow \cN(0,1) .
\end{align}
In order to do so, we  define the martingale-difference array
$$
Y_{nt}:=\frac{S_t(\theta)-S_{t-1}(\theta)}{\sqrt{n}}1_{\{t\leq \tau_n \}}, \quad t \in \bN, \quad n \in \bN.
$$
Indeed, since $\{S_t(\theta)\}$ is an $\{\cFt\}$-martingale and  $\tau_n$  an $\{\cFt\}$-stopping time, then  $\{t\leq \tau_n\}=\{\tau_n\leq t-1\}^{c}\in \cF_{t-1}$ and, consequently, we have
\begin{align*}
 \Exp_{\theta}[Y_{nt}|\cF_{t-1}]=\frac{1_{\{t\leq T_n\}}}{\sqrt{n}} \,  \Exp_{\theta} [S_t(\theta)-S_{t-1}(\theta) \, | \, \cF_{t-1}]=0.
\end{align*}
Moreover,  the increments of $\{S_{t}(\theta)\}$ are uniformly bounded by $K$, which implies that  for every $\epsilon>0$ we  have
\begin{align}\label{CLT2}
\sum_{t=1}^{\infty} \Exp_{\theta} [Y^2_{nt} \, \ind{|Y_{nt}|>\epsilon}]
\rightarrow   0
\end{align}
as $n \rightarrow \infty$.
 Therefore,    from the Martingale Central Limit Theorem (see, e.g. Theorem 35.12  in \cite{billi} and Slutsky's theorem it follows that if
\begin{align}\label{CLT1}
\sum_{t=1}^{\infty} \Exp[Y_{nt}^2 \, | \,\cF_{t-1}]= \frac{1}{n} \sum_{t=1}^{\tau_n} \Exp\left[ (S_t(\theta)-S_{t-1}(\theta))^2 \, | \, \cF_{t-1} \right]= \frac{I_{\tau_n}(\theta)}{n}
\end{align}
converges in probability to a positive number, then
\begin{align*}
\sqrt{\frac{n}{I_{\tau_n}(\theta)} } \sum_{t=1}^{\infty}Y_{nt} &=
\frac{1}{\sqrt{I_{\tau_n}(\theta)} }  \, \sum_{t=1}^{\tau_n}
\left[ S_t(\theta)-S_{t-1}(\theta) \right]  \\
&=    \frac{S_{\tau_n}(\theta)}{\sqrt{I_{\tau_n}(\theta)}} \longrightarrow \cN(0,1).
\end{align*}
If we now use  the  Taylor expansion \eqref{revi_taylor}, then the convergence \eqref{CLT0} takes the form
$$
\frac{I_{\tau_n}(\tilde{\theta}_{\tau_n})}{I_{\tau_n}(\theta)} \, \sqrt{I_{\tau_n}(\theta)} \, (\hat{\theta}_{\tau_n}-\theta) \rightarrow \cN(0,1),
$$
where $\tilde{\theta}_{\tau_n}$ lies between $\hat{\theta}_{\tau_n}$ and $\theta$. From the consistency of the estimator  \eqref{revi_consistent} it follows that  the ratio in the left-hand side goes to 1 almost surely and  from  Slutsky's theorem we obtain
$$
\sqrt{I_{\tau_n}(\theta)} \, (\hat{\theta}_{\tau_n}-\theta) \rightarrow \cN(0,1).
$$
From \eqref{revi_consistent} and another application of Slutsky's theorem we now obtain   \eqref{rev_asy_nor}. Finally, the second part  follows from the discussion that lead to Theorem \ref{CLTRCAT}.
\end{proof}


Therefore,  the proposed design leads to the same asymptotic behavior as that in the regular CAT design, as long as the proportion of revisions is small  relative to the number of distinct items. We expect that this is typically the case in practice, since most examinees tend to review and revise only a few items which they are not sure during the test process or at the end of the test.

\section{Simulation study}
We now present the results of a  simulation study that illustrates
the proposed design and our asymptotic results in a CAT with $n=50$ items. 
We consider items with  $m=3$ categories, thus,  each item can be revised at most once whenever revision is allowed. The parameters of the nominal response model are restricted in the following intervals $a_2 \in [-0.18, 4.15]$,  $a_3 \in [0.17, 3.93]$, $c_2 \in [  -8.27, 6.38]$ and  $c_3 \in [-7.00, 8.24]$, whereas we set $a_1=c_1=0$, which were selected based on a discrete item pool in Passos, Berger \& Frans E. Tan \cite{passos}. The analysis was  replicated  for  $\theta$ in $\{-3,-2,-1,0,1,2,3\}$.

With respect to the revision strategy,  we assume that the examinee decides to revise the $t^{th}$ question with probability, $p_t$. If we denote the total number of items which can be revised during the test as $n_1$, then $p_t$ satisfies the following recursion: $p_{t+1}=p_{t}-0.5/n_1$, $p_1=0.5$. For $n_1$ we considered the following possibilities:  $n_1/n=0.1,0.5,1$. Moreover, we assumed that whenever the examinee decides to revise,   each of the previous  items that have not been revised yet are equally likely to be selected.

For each of the above scenarios, we computed the  root mean square error (RMSE) of the final estimation  on the basis of $1000$ simulation runs. The  results are summarized in  Table \ref{tabler}. Note that when revision is allowed, the design is denoted as RCAT. We observe that revision often  improves  the ability estimation, especially  when the number of revisions is large. However, the RMSE is typically larger than the square root of the asymptotic variance, $\sqrt{n J^{*}(\theta)}$.  An exception seems to be the case that  $\theta=-2$ with a large number of  revisions.  In order to understand this further, we plot  in  Figure \ref{Fig1}  the evolution of the  total information $I_{t}(\theta)/ t$ (solid line with circles), the  information from the first responses, $\sum_{i=1}^{f_t} J(\theta,\bv_i) / f_t$ (dashed line with squares), the information from revisions $I^{R}_{t}(\theta) /t$ (dashed line with diamonds), where $1 \leq t \leq \tau_n$ and $I^{R}(\theta)$ is defined in \eqref{decompose}.   The horizontal line represents the asymptotic variance $J^{*}(\theta)$. Thus, we see that thanks to the contribution from a large number of revisions,  it is possible to outperform the best asymptotic performance that can be achieved in a standard CAT design.

Finally,   we  plot  in Figure \ref{Fig2} the  ``confidence intervals"   that would be obtained after $i$ items have been completed  in the case of a standard CAT, as well as when revision is allowed (in the case that $\theta=3$). Our asymptotic results suggests their validity for a large number of items and our graphs illustrate that revision seems  to actually improve the estimation of $\theta$.

\begin{table}
\caption{RMSE  in CAT and RCAT} 
\label{tabler}
\centering 
\begin{tabular}{ c c c c c c} 
\hline
  $\theta$ & $\sqrt{nJ^{*}(\theta)}$  &  CAT  &  & RCAT  &  \\
\hline
           &                                     &       &   & Expected number  & \\
           &                                     &       &   & of revisions     &  \\
\hline
           &                                 &        &  4      & 18       & 26 \\ [0.5ex]
\hline
  -3 & 0.0985 & 0.1042 & 0.1051 & 0.1068 & 0.1001 \\
  -2 & 0.0713 & 0.0746 & 0.0731 & 0.0701 & 0.0700 \\
  -1 & 0.0681 & 0.0716 & 0.0724 & 0.0718 & 0.0714 \\
  0  & 0.0681 & 0.0743 & 0.0723 & 0.0722 & 0.0721 \\
  1  & 0.0683 & 0.0773 & 0.0716 & 0.0699 & 0.0704 \\
  2  & 0.0681 & 0.0747 & 0.0718 & 0.0702 & 0.0701 \\
  3  & 0.0710 & 0.0787 & 0.0756 & 0.0728 & 0.0721 \\
\hline
\end{tabular}

\end{table}

\begin{figure}
\centering
  \includegraphics [height=60mm,width=120mm]{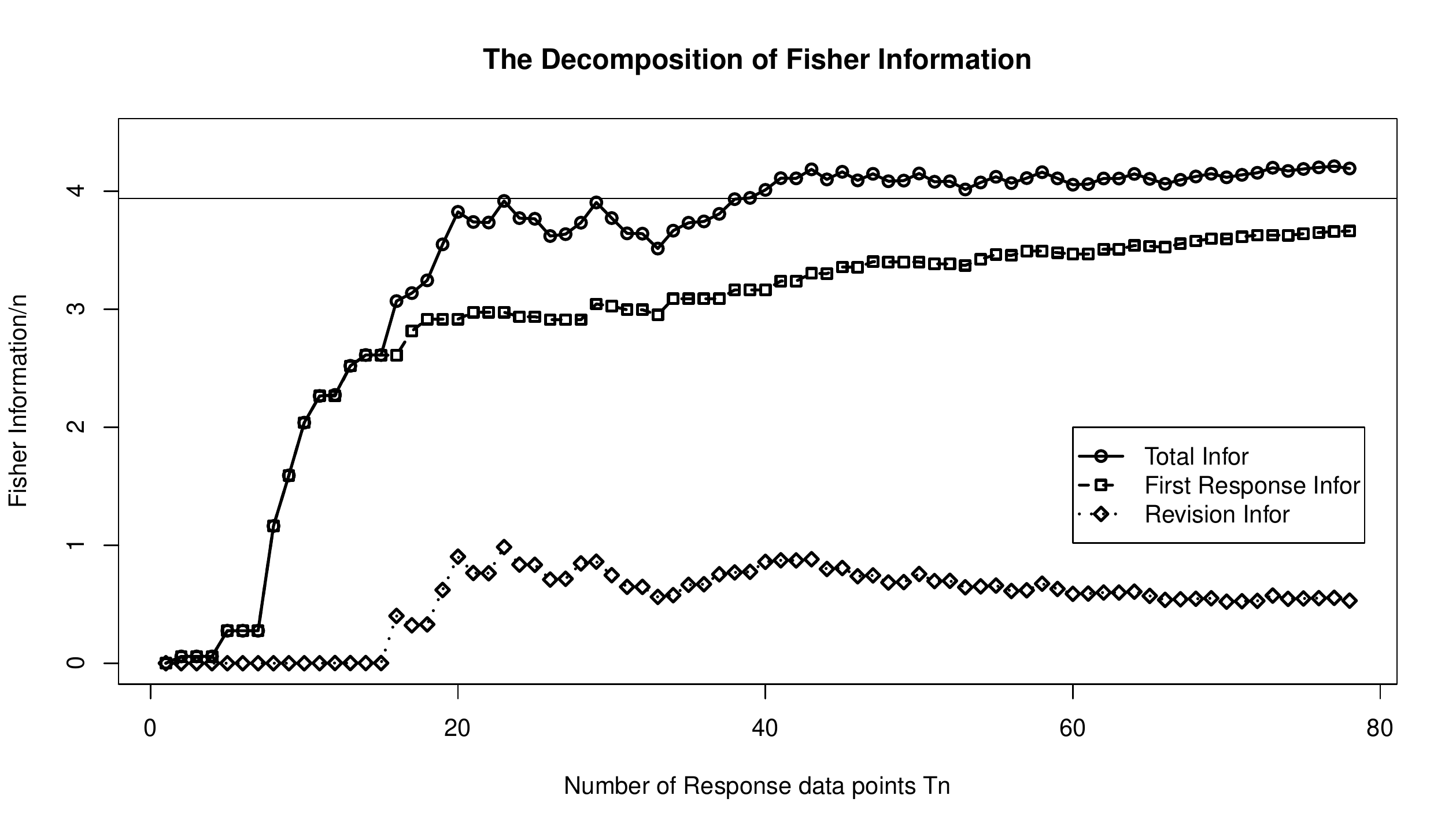}\\
  \caption{\small {The solid line represents the evolution of the normalized Fisher information,  that is $\{I_{t}(\hat{\theta}_{t})/ t, 1 \leq t \leq \tau_n\}$,  in a CAT with response revision.   The dashed line with squares   represents  the  information from the first responses and the dashed line with diamonds  the information from revisions, according to the decomposition  \eqref{decompose}. The true ability value is  $\theta=-2$.}}
\label{Fig1}
\end{figure}



\begin{figure}[]
\centering
  \includegraphics [height=60mm,width=120mm]{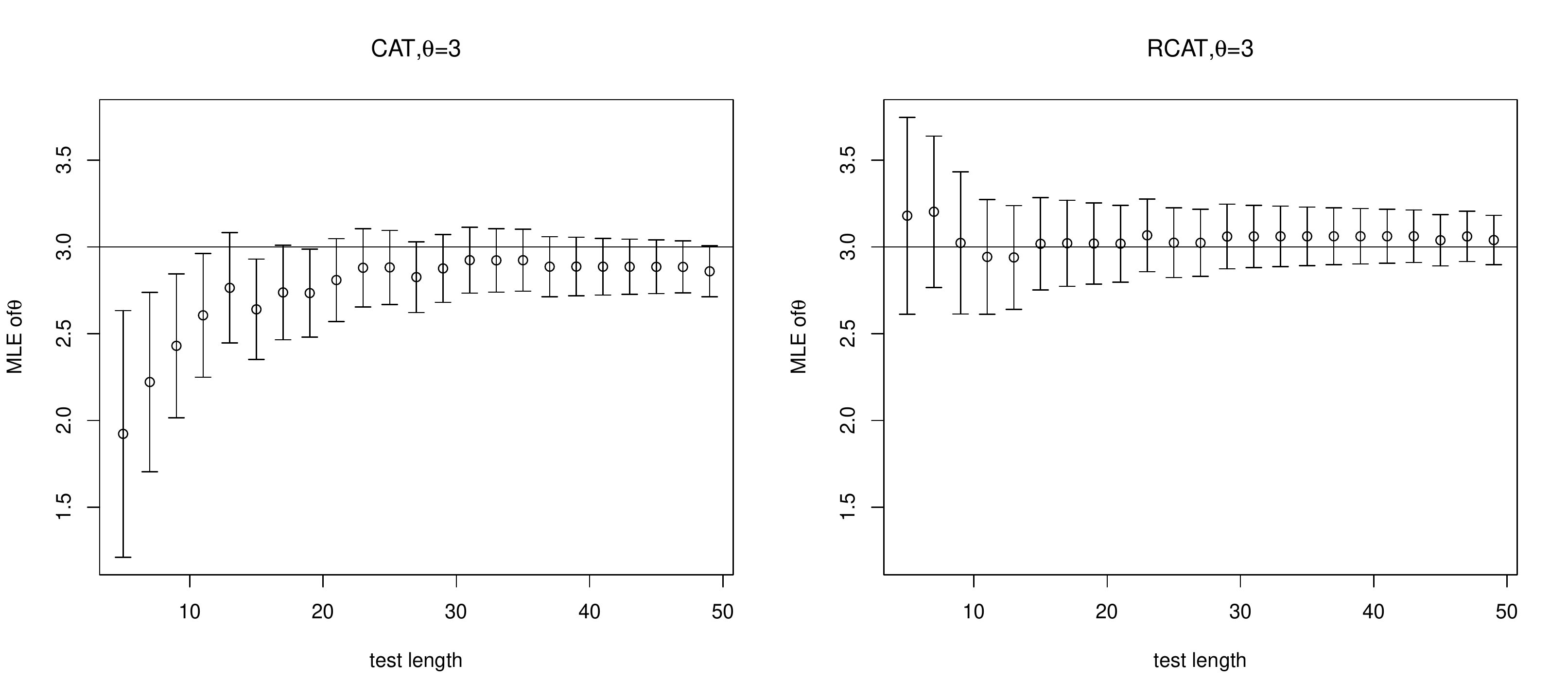}\\
  \caption{\small{The plot in the  left-hand side presents the  intervals $\hat{\theta}_{i} \pm 1.96 \cdot (I_{i}(\hat{\theta}_{i}))^{1/2}$, $1 \leq i \leq n$ in the case of the standard CAT. The plot in the right-hand side presents the  intervals $\hat{\theta}_{\tau_i} \pm 1.96 \cdot  (I_{\tau_i}(\hat{\theta}_{\tau_i}))^{1/2}$, $1 \leq i \leq n$ in the case where response revision is allowed. In both cases,  the true value of $\theta$ is $3$.}}
  \label{Fig2}
\end{figure}

\section{Conclusions}

In the first  part of this work,  we considered the   design of CAT that is based on the  nominal response model. Assuming conditional independence of the responses given the selected items and that the item parameters belong to a  bounded set, we established the strong consistency  of the MLE  for any item selection strategy and its asymptotic efficiency when the items are selected to maximize the current level of Fisher information. It is interesting to note that in the  special case of binary items ($m=2$) the  nominal response model reduces to the dichotomous 2PL model and, in this context, our  results complement the ones that were obtained in  \cite{r2} under the same model. Indeed, here  we assume that all item parameters belong to a bounded set, whereas in  \cite{r2}  it is assumed that the item difficulty parameter, $b$,  is unbounded,  a rather unrealistic assumption in practice where items are drawn from a given item bank. Moreover, we establish the strong consistency of the MLE \textit{for any item selection strategy}, unlike \cite{r2} where this is done only  when $b_i=\theta_{i-1}$.   Finally, from a technical point of view,  while the proofs  in  \cite{r2}  are heavily based on this closed-form expression for the $b_i$'s,  here we do not explicitly use this expression in our proofs (since it is not available in the general case of the nominal response model anyway).
  
  In the second part of this work, we  proposed a novel CAT design in which response revision is allowed. We showed that the proposed estimator is strongly consistent and that it becomes asymptotically normal (with the same asymptotic variance as in the standard CAT) when  items are selected to maximize the Fisher information at the current  ability estimate and the number of revision is small relative to the total number of items.
We further illustrated our theoretical results with a  simulation study.

From a policy point of view, our main message is that  the  nominal  response model  should be used for the design of CAT for two reasons. First,  because it  captures more  information than dichotomous models which collapse all possible wrong answers of an item to one  category. Second,   because it can be used in a natural way to allow for response revision. In fact,  one of the most appealing aspects of our approach is that it incorporates response  revision  \textit{without any additional calibration effort} than the one needed by the standard CAT that is based on the nominal response model.

Our work provides the first rigorous analysis  of  a  CAT design in which response revision is allowed and it opens a number of research directions. First of all,  items in reality are drawn without replacement from a finite pool.  This may call for modifications of the item selection strategy in order  to make  the proposed scheme more
robust (see, e.g.,  \cite{rr}). Moreover, more empirical work is required in order to understand the effect of  response revision on the ability  estimation, which can be much more substantial in practice than in the (idealistic) setup of our simulation study.

While our approach is robust, in the sense that we do not explicitly model the decision of the examinee to revise or not at each step given the selected items, it may result in a  loss of efficiency when the revision strategy  depends on the ability of the examinee. Modeling  this behavior is a challenge that could be addressed as soon as CATs that allow for response revision begin to be implemented in practice and relevant data can be obtained. Finally, it remains an open problem to  incorporate response revision in the case of  binary items, where a  dichotomous IRT model needs to be used and our approach cannot be applied.

\end{document}